\documentclass[11pt]{article}

\usepackage{blindtext}

%

%
%
%

\usepackage{amsmath}
\usepackage{tikz}
\usepackage{caption}
\usepackage{subcaption}
\usepackage{amsthm}
\usepackage{amssymb}
\usepackage{natbib}
\usepackage{authblk}


\newtheorem{theorem}{Theorem}
\newtheorem{lemma}{Lemma}

\newtheorem{corollary}{Corollary}

\newcommand{\stargraph}[2]{\begin{tikzpicture}
    \node[circle,fill=black] at (360:0mm) (center) {};
    \foreach \n in {1,...,#1}{
        \node[circle,fill=black] at ({\n*360/#1}:#2cm) (n\n) {};
        \draw (center)--(n\n);
    }
\end{tikzpicture}}


\usepackage{lastpage}



\begin{document}

\title{Spatial depth for data in metric spaces}
\author{Joni Virta}
\affil{Department of Mathematics and Statistics\\University of Turku, Finland}


\maketitle

\begin{abstract}
We propose a novel measure of statistical depth, the metric spatial depth, for data residing in an arbitrary metric space. The measure assigns high (low) values for points located near (far away from) the bulk of the data distribution, allowing quantifying their centrality/outlyingness. This depth measure is shown to have highly interpretable geometric properties, making it appealing in object data analysis where standard descriptive statistics are difficult to compute. The proposed measure reduces to the classical spatial depth in a Euclidean space. In addition to studying its theoretical properties, to provide intuition on the concept, we explicitly compute metric spatial depths in several different metric spaces. Finally, we showcase the practical usefulness of the metric spatial depth in outlier detection, non-convex depth region estimation and classification.  
\end{abstract}

\textbf{Keywords}: Distance function, Object data, Outlier detection, Statistical depth

\section{Introduction}\label{sec:intro}

The purpose of this work is to propose a measure of \textit{depth} for data residing in a general metric space $(\mathcal{X}, d)$. By depth we refer to a function that assigns to each point $\mu \in \mathcal{X}$ a measure of centrality $D(\mu; P_X)$ with respect to a given probability distribution $P_X$ on $\mathcal{X}$. The larger $D(\mu; P_X)$ is, the more centrally located $\mu$ is with respect to the probability mass of $P_X$. And, vice versa, points with small depths can be seen as outlying in the view of $P_X$. Our reason for working in a general metric space $\mathcal{X}$ is that many modern applications produce data that are inherently non-Euclidean (functions, compositions, trees, graphs, rotations, positive-definite matrices, etc.), known also as \textit{object data}. Hence, devising methodology that works in arbitrary metric spaces allows for capturing all of these data types at once, see, e.g., \cite{bhattacharya2003large, lyons2013distance, dubey2019frechet, dubey2022modeling, dai2022tukey, virta2022sliced} for a (highly non-exhaustive) list of works with a similar viewpoint. A particularly important aspect of object data analysis is interpretability (since the data spaces themselves are often quite unusual and exotic) and this plays a key role in the current work as well. 

The study of depths in Euclidean spaces, i.e., when $\mathcal{X} = \mathbb{R}^p$ and $d$ is the Euclidean distance, has a long history. We do not attempt to paraphrase these developments here and refer the interested reader to \cite{zuo2000general, serfling2006depth, mosler2022choosing} instead. As a concrete example of a depth function, consider the \textit{lens depth} \citep{liu2011lens} $D_L(\mu; P_X)$ of a point $\mu$ in a Euclidean space with respect to a distribution $P_X$, defined as
\begin{align}\label{eq:lens_depth}
    D_L(\mu; P_X) := P_X( \| X_1 - X_2 \| \geq \max \{ \| X_1 - \mu \|, \| X_2 - \mu \| \} ),
\end{align}
where $X_1, X_2$ are drawn independently from $P_X$. That is, $D_L(\mu; P_X)$ gives the probability that the edge between $X_1$ and $X_2$ is the longest in the triangle formed by the points $X_1, X_2, \mu$. This event can be understood as the point $\mu$ being ``between'' $X_1$ and $X_2$ and, hence, the greater its probability, the more central the point $\mu$ has to be. Other examples of classical depths include the half-space depth \citep{tukey1975mathematics}, Oja depth \citep{oja1983descriptive}, the simplicial depth \citep{liu1990notion} and the spatial depth \citep{chaudhuri1996geometric, vardi2000multivariate}, which is also known as the $L_1$-depth and is treated in more detail below.

Inspection of the different depth measures reveals that they are typically heavily motivated by geometric arguments. As such, it is reasonable to expect that their underlying ideas would apply also if the data resides in an arbitrary metric space instead of a Euclidean space. This viewpoint for the lens depth was taken in \cite{cholaquidis2020weighted, geenens2023statistical}, where it was shown that the definition \eqref{eq:lens_depth} leads to a meaningful measure of depth for data residing in a general metric space $(\mathcal{X}, d)$ when the Euclidean distances in \eqref{eq:lens_depth} are replaced with $d$. In \cite{dai2022tukey}, an equivalent treatment was given to half-space depth. As a natural continuation to these works, the contribution of the current paper is to give a similar treatment to the classical spatial depth,
\begin{align}\label{eq:spatial_depth}
    D_S(\mu; P_X) := 1 - \left\| \mathrm{E} \left\{ \mathrm{sgn}(X - \mu) \right\} \right\|^2,
\end{align}
where $X \sim P_X$, $\mathrm{sgn}(x) := \mathrm{I}(x \neq 0) x/\| x \|$ and $\| \cdot \|$ denotes the Euclidean norm. Note that, strictly speaking, the definition \eqref{eq:spatial_depth} does not equal the classical definition of $L_1$-depth/spatial depth \citep{vardi2000multivariate} but, rather, the two are a monotone transformation apart. Our reason for using a definition that slightly differs from the standard is that this choice will greatly simplify the resulting expression for the metric space extension of $D_S$.

To give an idea of the contents of this paper, we next summarize the definition and the most interesting properties of our proposed metric space extension of \eqref{eq:spatial_depth}. Let $(\mathcal{X}, d)$ be a complete and separable metric space and let $P_X$ be a probability distribution on $\mathcal{X}$. Consider, for the sake of this demonstration, a simplified scenario where the distribution $P_X$ has no atoms, i.e., $P_X(X = \mu) = 0$ for all $\mu \in \mathcal{X}$. In such a case, our proposed \textit{metric spatial depth} of the point $\mu \in \mathcal{X}$ with respect to $P_X$ takes the form
\begin{align}\label{eq:main_concept_intro}
    D(\mu; P_X) = 1 - \frac{1}{2} \mathrm{E} \left\{ \frac{ d^2(X_1, \mu) + d^2(X_2, \mu) - d^2(X_1, X_2) }{d(X_1, \mu) d(X_2, \mu)} \right\},
\end{align}
where $X_1, X_2$ are drawn independently from $P_X$. Simple computation reveals that \eqref{eq:main_concept_intro} reduces to \eqref{eq:spatial_depth} when $d$ is taken to be the Euclidean distance.

While, from the definition \eqref{eq:main_concept_intro} alone, it is not at all clear what $D(\mu; P_X)$ is measuring in the case of a general metric, it actually turns out that $D(\mu; P_X)$ captures the geometric structure of $\mathcal{X}$ in a highly natural way. To describe this behavior, we first recall the concept of lines in metric spaces. Three points, $x_1, x_2, x_3 \in \mathcal{X}$, are said to be on a line if some permutation of them yields equality in the triangle inequality for the metric $d$. Given the points, $x_1, x_2, x_3 \in \mathcal{X}$, we denote by $L[x_1, x_2, x_3]$ the event that $d(x_1, x_3) = d(x_1, x_2) + d(x_2, x_3)$, i.e., that the three points are on a line such that $x_2$ is in the middle of the other two. Using this terminology, our Theorem \ref{theo:range} in Section \ref{sec:properties} says that $D(\mu; P_X)$ takes values in $[0, 2]$ and that:
\begin{align}
    D(\mu; P_X) = 0 \quad & \mbox{if and only if} \quad P_X(L[X_1, X_2, \mu] \cup L[X_2, X_1, \mu]) = 1, \label{eq:case_1_intro} \\
    D(\mu; P_X) = 2 \quad & \mbox{if and only if} \quad P_X(L[X_1, \mu, X_2]) = 1, \label{eq:case_2_intro}
\end{align}
where $X_1, X_2$ are drawn independently from $P_X$. The events $L[X_1, X_2, \mu]$ and  $L[X_1, \mu, X_2]$ have been visualized in Figure \ref{fig:intro}. These two extreme cases justify calling $D(\mu ; P_X)$ a measure of depth: For a point $\mu$ to have maximal depth it must by \eqref{eq:case_2_intro} almost surely reside between any two points drawn from $P_X$ (on a line going through them). It goes without saying that, for this to happen, $\mu$ has to be a very central/deep point of $P_X$. Moreover, intuition says that such a point $\mu$ might not actually exist in every given metric space. Indeed, we later show that, when $(\mathcal{X}, d)$ is a Euclidean space, no point may have metric spatial depth equal to~2 and that, in fact, the maximal depth of a point in a Euclidean space is~1. 


\begin{figure}[t!]
    \centering
    \includegraphics[width=1\textwidth]{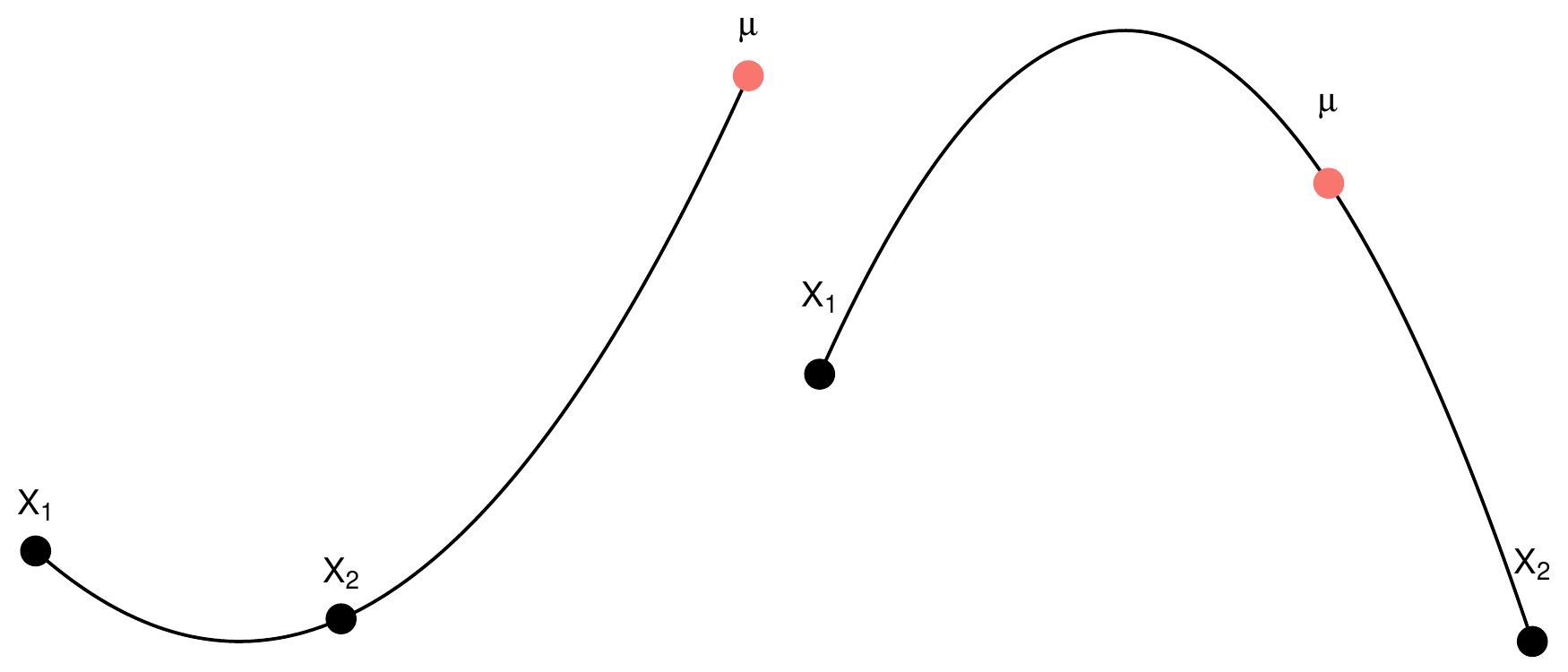}
    \caption{The left panel demonstrates the event $L[X_1, X_2, \mu]$ and the right panel the event $L[X_1, \mu, X_2]$. Curved lines have been used to emphasize that $(\mathcal{X}, d)$ does not have to be a Euclidean space.}
    \label{fig:intro}
\end{figure}

In a similar vein, for achieving the minimal depth, $D(\mu ; P_X) = 0$, by \eqref{eq:case_1_intro} the point $\mu$ may never be on a line between two points drawn from $P_X$, in the almost sure sense. Intuitively, any point satisfying this must lie sufficiently outside of the main probability mass of $P_X$, making it an outlier. The condition is perhaps most easily understood in an extreme case where $\mu$ is exceedingly far away from the bulk of the distribution $P_X$. Then $d(X_1, X_2)$ is negligible compared to $d(X_1, \mu)$ and $d(X_2, \mu)$ and, by ``zooming out'', we see that the three points are always approximately on a line, roughly satisfying the condition in \eqref{eq:case_1_intro} and implying that $\mu$ should have depth close to zero. This is indeed what happens, as is shown asymptotically later in Theorem \ref{theo:divergent_sequence} in Section \ref{sec:properties}. Finally, as pointed out earlier, properties such as \eqref{eq:case_1_intro} and \eqref{eq:case_2_intro} are highly desirable in object data analysis, where standard statistical intuition about Euclidean data no longer suffices.

While our work is targeted towards data sets lying in non-Euclidean spaces (as, in an Euclidean space, the proposed concept reduces to the standard spatial depth), it can still provide novel insights for Euclidean-valued data as well. Namely, as we demonstrate in Section \ref{sec:data_examples}, through the use of the kernel trick or graph distances, our proposed metric spatial depth allows combining a non-linear transformation with depth estimation for data in $\mathbb{R}^p$ in a natural way. This lets us in particular, achieve depth regions that are non-convex and adapt to non-standard data shapes, see Figure \ref{fig:nonlinear} in Section~\ref{sec:data_examples}. Similar ideas were used in the context of outlier detection in \cite{schreurs2021outlier}. In the same vein, our final example in Section \ref{sec:examples} shows how the metric spatial depth can be adapted to produce ``$L_p$-versions'' of the classical spatial depth (which is obtained when $p = 2$).


The main contributions of the current work are:
\begin{itemize}
    \item We extend the spatial depth to an arbitrary metric space and extensively study its robustness, interpretation, continuity and invariance properties (Section \ref{sec:properties}).
    \item We explicitly compute the metric spatial depth in four different scenarios, shedding further light on the intuitive meaning of the concept (Section \ref{sec:examples}).
    \item We discuss the properties, including root-$n$-consistency, of the sample metric spatial depth (Section \ref{sec:sample}).
    \item We apply the metric spatial depth to three different practical scenarios: outlier detection, non-convex depth region estimation and classification, also comparing it to both currently available metric depth concepts, the metric lens depth and metric half-space depth (Section~\ref{sec:data_examples}).
\end{itemize}

\section{Metric spatial depth}\label{sec:properties}


Let $(\mathcal{X}, d)$ be a complete and separable metric space and let $P_X$ be a probability distribution on $\mathcal{X}$. Throughout this work, we make the implicit assumption that every space is equipped with its Borel $\sigma$-algebra, guaranteeing, in particular, that all continuous functions are measurable. To define our proposed concept of depth, we first introduce the auxiliary function $h: \mathcal{X}^3 \rightarrow \mathbb{R}$,
\begin{align}\label{eq:h_function}
    h(x_1, x_2, x_3) := \mathbb{I}( x_3 \notin \{x_1, x_2\} ) \frac{ d^2(x_1, x_3) + d^2(x_2, x_3) - d^2(x_1, x_2) }{d(x_1, x_3) d(x_2, x_3) },
\end{align}
where $\mathbb{I}(\cdot)$ denotes the indicator function. Using $h$, we define the metric spatial depth $D(\mu ; P_X)$ of the point $\mu \in \mathcal{X}$ with respect to $P_X$ to be
\begin{align}\label{eq:main_concept}
    D(\mu; P_X) := 1 - \frac{1}{2} \mathrm{E} \{ h(X_1, X_2, \mu) \},
\end{align}
where $X_1, X_2 \sim P_X$ are independent. From the definition \eqref{eq:main_concept} alone it is not at all clear what kind of mapping $D$ is, and this section is thus devoted to studying the properties of $D(\mu; P_X)$. We divide the treatment to four categories: robustness, interpretation, continuity and invariance.

\subsection{Robustness of $D(\mu; P_X)$}

As with Euclidean statistical methods, also distance-based methodology commonly makes moment assumptions. In the latter case these are typically expressed as requirements of the form $\mathrm{E} \{ d^k(X_1, X_2) \} < \infty$ where $X_1, X_2 \sim P_X$ are independent and $k \in \mathbb{N}$. For example, this condition with $k = 2$ is in some sense equivalent to the requirement of finite variance (to which it reduces when $d$ is the Euclidean distance in $\mathbb{R}$). The next result shows that the metric spatial depth $D(\mu ; P_X)$ makes no such assumptions, making it a \textit{robust} measure of depth, applicable to any distribution $P_X$. We use $\mathcal{P}$ to denote the set of all probability distributions taking values in $\mathcal{X}$.
\begin{theorem}\label{theo:existence}
$D(\mu ; P_X)$ is finite for all $\mu \in \mathcal{X}$ and $P_X \in \mathcal{P}$.
\end{theorem}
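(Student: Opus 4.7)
The plan is to show that the integrand $h(x_1, x_2, \mu)$ in \eqref{eq:main_concept} is uniformly bounded by a constant that depends only on the metric axioms, so that the expectation exists and is finite without any moment hypothesis on $P_X$. This is really the whole content of the theorem — once the bound is in place, finiteness is immediate.

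First, I would dispose of the degenerate case: when $\mu \in \{x_1, x_2\}$ the indicator $\mathbb{I}(x_3 \notin \{x_1, x_2\})$ in \eqref{eq:h_function} vanishes, so $h(x_1, x_2, \mu) = 0$ by definition and the potentially problematic ratio is never evaluated. Outside this set, $d(x_1, \mu)$ and $d(x_2, \mu)$ are strictly positive (by the identity of indiscernibles for $d$), so the denominator is nonzero and $h$ is a finite real number.

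The key step is to bound $h$ pointwise on the set where the indicator is $1$. Writing $a := d(x_1, \mu)$, $b := d(x_2, \mu)$, $c := d(x_1, x_2)$, the triangle inequality applied to the three points $\{x_1, x_2, \mu\}$ gives
\begin{align*}
|a - b| \;\leq\; c \;\leq\; a + b.
\end{align*}
Squaring both inequalities yields
\begin{align*}
(a-b)^2 \;\leq\; c^2 \;\leq\; (a+b)^2,
\end{align*}
i.e., $a^2 + b^2 - 2ab \leq c^2 \leq a^2 + b^2 + 2ab$. Rearranging, $-2ab \leq a^2 + b^2 - c^2 \leq 2ab$, and dividing through by the strictly positive quantity $ab$ gives
\begin{align*}
-2 \;\leq\; h(x_1, x_2, \mu) \;=\; \frac{a^2 + b^2 - c^2}{ab} \;\leq\; 2.
\end{align*}
Thus $|h(x_1, x_2, \mu)| \leq 2$ uniformly on $\mathcal{X}^3$, measurability of $h$ following from continuity of $d$ and the Borel setup fixed in Section \ref{sec:properties}.

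From here the conclusion is routine: $h(X_1, X_2, \mu)$ is a bounded measurable random variable, so $\mathrm{E}\{h(X_1, X_2, \mu)\}$ exists and lies in $[-2, 2]$, whence $D(\mu; P_X) = 1 - \tfrac12 \mathrm{E}\{h(X_1, X_2, \mu)\}$ is finite (in fact, it lies in $[0, 2]$, which is the refinement proved later in Theorem \ref{theo:range}). No moment assumption on $P_X$ is needed because the normalization by $d(X_1, \mu) d(X_2, \mu)$ in \eqref{eq:h_function} cancels the quadratic distance terms in the numerator — this is the structural reason that the metric spatial depth is automatically robust. The only subtlety to check is that the bound $|h| \leq 2$ is derived from a genuine application of the triangle inequality (not merely a Euclidean identity), which is what makes the argument work in any metric space.
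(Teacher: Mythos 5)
Your proof is correct and follows essentially the same route as the paper's: both derive the uniform bound $|h| \leq 2$ from the squared triangle and reverse triangle inequalities applied to the three points $x_1, x_2, \mu$, and then conclude finiteness of the expectation. Your additional remarks on the degenerate case and measurability are fine but do not change the substance of the argument.
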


Another classical requirement for calling a statistic robust is that its \textit{influence function} should be bounded \citep{hampel1986robust}. The influence function $IF: \mathcal{X} \to \mathbb{R}$ of $D(\mu; P_X)$ at $z \in \mathcal{X}$ is defined as,
\begin{align*}
    IF(z; D, \mu, P_X) := \lim_{\varepsilon \downarrow 0} \frac{D(\mu; (1 - \varepsilon) P_X + \varepsilon \delta_z) - D(\mu; P_X)}{\varepsilon},
\end{align*}
where $(1 - \varepsilon) P_X + \varepsilon \delta_z$ is a mixture of $P_X$ and $\delta_z$, a Dirac point mass at $z \in \mathcal{X}$. Thus, $IF$ measures the infinitesimal change in the metric spatial depth when the distribution $P_X$ is contaminated by a small probability mass at point $z$. For $D$ to be considered robust, the influence function must be bounded in $z$, i.e., there must be an upper limit for the effect that any contamination, no matter how outlying, is able to cause to the depth of the point $\mu$. Our next result reveals this to be the case.

\begin{theorem}\label{theo:influence_function}
    For fixed $z \in \mathcal{X}$, we have,
    \begin{align*}
        IF(z; D, \mu, P_X) = 2 - 2 D(\mu; P_X) - \mathrm{E} \{ h(X, z, \mu) \}.
    \end{align*}
    Moreover,
    \begin{align*}
        \mathrm{sup}_{z \in \mathcal{X}} | IF(z; D, \mu, P_X) | \leq 4.
    \end{align*}
\end{theorem}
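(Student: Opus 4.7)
The plan is to substitute the contaminated measure directly into the depth formula, exploit the fact that $h$ is a bounded, symmetric kernel, and then read off both the closed form and the bound essentially for free.

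First I would write $P_\varepsilon := (1-\varepsilon) P_X + \varepsilon \delta_z$ and expand the product measure $P_\varepsilon \otimes P_\varepsilon$ into four pieces, weighted by $(1-\varepsilon)^2$, $\varepsilon(1-\varepsilon)$ (twice, by symmetry of $h$ in its first two arguments), and $\varepsilon^2$. Integrating $h(\cdot,\cdot,\mu)$ against this decomposition gives
\begin{align*}
D(\mu; P_\varepsilon) = 1 - \tfrac{1}{2}\bigl[ (1-\varepsilon)^2 \mathrm{E}\{h(X_1,X_2,\mu)\} + 2\varepsilon(1-\varepsilon) \mathrm{E}\{h(X,z,\mu)\} + \varepsilon^2 h(z,z,\mu) \bigr].
\end{align*}
All three terms are finite (by Theorem \ref{theo:existence} applied to $P_X$ and to $\delta_z$), so this expression is a genuine polynomial in $\varepsilon$. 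Subtracting $D(\mu; P_X) = 1 - \tfrac{1}{2}\mathrm{E}\{h(X_1,X_2,\mu)\}$, dividing by $\varepsilon$, and letting $\varepsilon \downarrow 0$ gives
\begin{align*}
IF(z; D, \mu, P_X) = \mathrm{E}\{h(X_1,X_2,\mu)\} - \mathrm{E}\{h(X,z,\mu)\}.
\end{align*}
Rewriting $\mathrm{E}\{h(X_1,X_2,\mu)\} = 2 - 2D(\mu; P_X)$ from the definition of $D$ yields the claimed formula.

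For the bound, the key step is to show that $h$ itself is uniformly bounded by $2$ in absolute value. Write $a = d(x_1, x_3)$, $b = d(x_2, x_3)$, $c = d(x_1, x_2)$; on the set where the indicator is nonzero we have $a, b > 0$, and the triangle inequality gives $|a-b| \leq c \leq a+b$, hence $(a-b)^2 \leq c^2 \leq (a+b)^2$. Rearranging,
\begin{align*}
-2ab \leq a^2 + b^2 - c^2 \leq 2ab,
\end{align*}
which means $h(x_1,x_2,x_3) \in [-2,2]$ pointwise (the value is $0$ on the excluded set by the indicator).

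Combining these two observations, $\mathrm{E}\{h(X,z,\mu)\}$ lies in $[-2,2]$, and since $D(\mu; P_X) \in [0,2]$ (again using $|h| \leq 2$), the term $2 - 2D(\mu; P_X)$ also lies in $[-2,2]$. The triangle inequality in $\mathbb{R}$ then gives $|IF(z; D, \mu, P_X)| \leq 4$, uniformly in $z$. There is no real obstacle here; the only subtle point is the harmless bookkeeping around the $\varepsilon^2 h(z,z,\mu)$ term, which drops out in the limit precisely because $h$ is bounded.
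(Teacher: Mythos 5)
Your proposal is correct and follows essentially the same route as the paper: expand $D(\mu;(1-\varepsilon)P_X+\varepsilon\delta_z)$ via the four-way decomposition of the product measure (noting $h(z,z,\mu)=2\,\mathbb{I}(\mu\neq z)$, which is the paper's form of the $\varepsilon^2$ term), differentiate the resulting polynomial in $\varepsilon$ at zero, and bound the result using the pointwise bound $|h|\leq 2$ from the triangle inequalities together with $D(\mu;P_X)\in[0,2]$. No gaps.
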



\subsection{Interpretation of $D(\mu; P_X)$}

Recall next from Section \ref{sec:intro} that $L[x_1, x_2, x_3]$ denotes the event that $d(x_1, x_3) = d(x_1, x_2) + d(x_2, x_3)$, i.e., that the three points $x_1, x_2, x_3 \in \mathcal{X}$ fall in a line (in the sense of the metric $d$) such that $x_2$ is in the middle of the other two. Note also that this notation is not ``unique'' in the sense that, for example, $L[x_1, x_2, x_3] = L[x_3, x_2, x_1]$. We now have the following characterization about the range of possible values for $D(\mu; P_X)$.


\begin{theorem}\label{theo:range}
$D(\mu ; P_X)$ takes values in the interval $[0, 2]$. Additionally,
\begin{enumerate}
    \item[(i)] $D(\mu ; P_X) = 0$ if and only if
    \begin{align*}
        P_X(\{\mu\}) = 0 \quad \mbox{and} \quad P_X ( L[X_1, X_2, \mu] \cup L[X_2, X_1, \mu] ) = 1.
    \end{align*}
    \item[(ii)] $D(\mu ; P_X) = 2$ if and only if
    \begin{align*}
        P_X(\{\mu\}) = 0 \quad \mbox{and} \quad P_X ( L[X_1, \mu, X_2] ) = 1.
    \end{align*}
\end{enumerate}
\end{theorem}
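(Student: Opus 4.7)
My plan is to recognize the kernel $h(x_1, x_2, \mu)$, away from the exceptional set where the indicator in \eqref{eq:h_function} vanishes, as a metric-space analogue of $2\cos(\theta)$ with $\theta$ the ``angle at $\mu$''. The key identities, valid whenever $\mu \notin \{x_1, x_2\}$, are
\begin{align*}
    d(x_1,\mu)\,d(x_2,\mu)\,(2 - h(x_1,x_2,\mu)) &= d^2(x_1,x_2) - (d(x_1,\mu) - d(x_2,\mu))^2, \\
    d(x_1,\mu)\,d(x_2,\mu)\,(2 + h(x_1,x_2,\mu)) &= (d(x_1,\mu) + d(x_2,\mu))^2 - d^2(x_1,x_2).
\end{align*}
The right-hand sides are nonnegative by the reverse triangle inequality and the triangle inequality, respectively, so $-2 \leq h \leq 2$ on this set; on the exceptional set $h = 0$ by definition, so $h \in [-2, 2]$ everywhere. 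Taking expectations in \eqref{eq:main_concept} then gives $D(\mu; P_X) \in [0, 2]$.

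For the extremal characterizations, I would note that $D(\mu; P_X) = 0$ (resp.\ $= 2$) is equivalent to $\mathrm{E}\{h(X_1, X_2, \mu)\} = 2$ (resp.\ $= -2$), and, since $h$ is bounded by $\pm 2$, this is in turn equivalent to $h(X_1, X_2, \mu) = 2$ (resp.\ $= -2$) almost surely. Because $h$ vanishes on the event $\{\mu \in \{X_1, X_2\}\}$, this event must be null in either extreme; by independence of $X_1, X_2$ (via $P(\mu \in \{X_1,X_2\}) = 2 P_X(\{\mu\}) - P_X(\{\mu\})^2$), this is equivalent to $P_X(\{\mu\}) = 0$, which is the first half of each of (i) and (ii).

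The second half follows by direct translation of the boundary equalities in $h$ into line conditions using the two displayed identities. On $\{\mu \notin \{X_1, X_2\}\}$, we have $h = 2$ iff $|d(X_1,\mu) - d(X_2,\mu)| = d(X_1,X_2)$; splitting into sign cases yields either $d(X_1,\mu) = d(X_1,X_2) + d(X_2,\mu)$ or $d(X_2,\mu) = d(X_2,X_1) + d(X_1,\mu)$, i.e., $L[X_1, X_2, \mu] \cup L[X_2, X_1, \mu]$. Similarly, $h = -2$ iff $d(X_1,\mu) + d(X_2,\mu) = d(X_1,X_2)$, which is exactly $L[X_1, \mu, X_2]$. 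Combined with the atomlessness condition, this gives (i) and (ii).

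The computations are elementary and the main conceptual observation is the triangle-inequality/law-of-cosines recognition in the first paragraph; the only mildly delicate bookkeeping concerns the indicator in $h$, which I dispose of by first extracting the condition $P_X(\{\mu\}) = 0$ at both extremes and then working with the algebraic formula without obstruction.
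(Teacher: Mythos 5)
Your proof is correct and follows essentially the same route as the paper: both establish $|h|\leq 2$ via the squared triangle and reverse triangle inequalities (your two displayed identities are just the paper's inequalities \eqref{eq:rev_triangle_ineq} and \eqref{eq:triangle_ineq} rearranged), then characterize the extremes by forcing almost-sure equality, with the atomlessness condition extracted from the indicator in $h$. Your write-up is somewhat more explicit about the bookkeeping (e.g., the inclusion--exclusion step showing $P(\mu\in\{X_1,X_2\})=0$ iff $P_X(\{\mu\})=0$), but there is no substantive difference in approach.
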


Part (i) of Theorem \ref{theo:range} says that for a point $\mu$ to have depth equal to zero, a necessary condition is that the distribution $P_X$ must be concentrated on a line containing $\mu$. The standard case where this occurs is when $\mathcal{X} = \mathbb{R}$ (meaning that any three points always fall onto a line) and $P_X$ is supported on an interval. Any point $\mu$ outside of this interval then has depth equal to zero. Part (ii), on the other hand, says that maximal depth $D(\mu ; P_X) = 2$ is achieved by any non-atom point $\mu$ such that, given two independent realizations $X_1, X_2$, the point $\mu$ always falls between these two points on a line going through all three. This condition is very strict and can be satisfied only in some specific metric spaces veering towards pathological, see Section \ref{subsec:example_rail} for an example. Interestingly, in an Euclidean space, the maximal achievable depth is $D(\mu ; P_X) = 1$, see Section~\ref{subsec:example_hilbert}.

Theorem \ref{theo:range} provides an interesting connection between the metric spatial depth and the metric lens depth \cite{cholaquidis2020weighted, geenens2023statistical}, defined as,
\begin{align}\label{eq:metric_lens_depth}
   D_L(\mu ; P_X) =  P_X[ d(X_1, X_2) \geq \max \{ d( X_1, \mu), d( X_2, \mu) \} ],
\end{align}
where $X_1, X_2 \sim P_X$ are independent. It is obvious that $D_L$ takes values in the interval $[0, 1]$ and our next result connects these end points to the extremal behaviour of the metric spatial depth. Its proof is a direct consequence of Theorem \ref{theo:range} and we omit it.

\begin{corollary}\label{theo:lens_connection}
Let $\mu \in \mathcal{X}$ be fixed.
\begin{enumerate}
    \item[(i)] If $D(\mu ; P_X) = 0$ then $D_L(\mu ; P_X) = 0$.
    \item[(ii)] If $D(\mu ; P_X) = 2$ then $D_L(\mu ; P_X) = 1$.
\end{enumerate}
\end{corollary}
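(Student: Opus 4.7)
The plan is to derive each of the two implications directly from the corresponding characterization given in Theorem \ref{theo:range}, by checking what the extremal line conditions there imply about the lens event $\{d(X_1, X_2) \geq \max\{d(X_1, \mu), d(X_2, \mu)\}\}$.

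For part (i), I would start by fixing a realization on the event $L[X_1, X_2, \mu]$, which gives $d(X_1, \mu) = d(X_1, X_2) + d(X_2, \mu)$. The lens event requires $d(X_1, X_2) \geq d(X_1, \mu)$, and combining these two inequalities forces $d(X_2, \mu) = 0$, i.e.\ $X_2 = \mu$. By symmetry, on $L[X_2, X_1, \mu]$ the lens event forces $X_1 = \mu$. Invoking Theorem \ref{theo:range}(i), $D(\mu; P_X) = 0$ yields $P_X(\{\mu\}) = 0$ and $P_X(L[X_1, X_2, \mu] \cup L[X_2, X_1, \mu]) = 1$, so that the lens event intersected with this full-measure union is contained in $\{X_1 = \mu\} \cup \{X_2 = \mu\}$, which is null by independence of $X_1, X_2$ and the assumption that $\mu$ is not an atom. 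Hence $D_L(\mu; P_X) = 0$.

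For part (ii), I would observe that on the event $L[X_1, \mu, X_2]$ we have $d(X_1, X_2) = d(X_1, \mu) + d(X_2, \mu)$, and since each summand is nonnegative, this immediately exceeds both $d(X_1, \mu)$ and $d(X_2, \mu)$, hence also their maximum. Thus $L[X_1, \mu, X_2]$ is contained in the lens event. By Theorem \ref{theo:range}(ii), $D(\mu; P_X) = 2$ implies $P_X(L[X_1, \mu, X_2]) = 1$, and therefore $D_L(\mu; P_X) \geq 1$; combined with the trivial upper bound $D_L(\mu; P_X) \leq 1$, equality follows.

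There is no genuine obstacle here: both implications reduce to a one-line comparison of the triangle-equality identities against the definition of the lens event, with the only subtle point being the handling of the boundary case $X_1 = \mu$ or $X_2 = \mu$ in part (i), which is dispatched by the no-atom clause of Theorem \ref{theo:range}(i) and the independence of $X_1, X_2 \sim P_X$.
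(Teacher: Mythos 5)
Your proof is correct and follows exactly the route the paper intends: the paper omits the argument, stating only that the corollary is a direct consequence of Theorem \ref{theo:range}, and your verification of both implications from the line characterizations is precisely that omitted argument. The one point worth noting is that in part (i) you do not actually need independence of $X_1$ and $X_2$ -- the events $\{X_1 = \mu\}$ and $\{X_2 = \mu\}$ are each null simply because each marginal is $P_X$ and $P_X(\{\mu\}) = 0$.
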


Corollary \ref{theo:lens_connection} essentially says that the range of behaviors that $D$ considers extremal is a subset of the behaviors that $D_L$ sees as extremal. For example, if the former marks the point $\mu$ as fully outlying, then so does the latter (but not necessarily vice versa). Note that this does not automatically mean that $D_L$ is a ``better'' measure of depth than $D$. On the contrary, we argue that the metric spatial depth $D$ can actually be seen as the ``safer'' choice of the two as Theorem \ref{theo:range} exactly quantifies what its extremal values measure. Note also that Corollary \ref{theo:lens_connection} does not say anything about the non-extremal behavior of the two depths, which can thus differ (however, when we compare the two later using simulations in Section \ref{sec:data_examples}, they do perform quite similarly).

We next turn our attention to an asymptotic version of Theorem \ref{theo:range}(i). Let $ \mu_n$ be a sequence of points in $\mathcal{X}$. We say that $\mu_n$ \textit{diverges} if $d(\mu_n, \mu) \rightarrow \infty$ as $n \rightarrow \infty$ for any fixed point $\mu \in \mathcal{X}$ (triangle inequality shows that if this condition holds for one point in $\mathcal{X}$, then it holds for all points). That is, divergent sequences are such that they eventually get arbitrary far from any fixed point and, naturally, the existence of such a sequence implies that the space $(\mathcal{X}, d)$ is unbounded.

\begin{theorem}\label{theo:divergent_sequence}
Let $\mu_n$ be a divergent sequence in $\mathcal{X}$. Then $D(\mu_n; P_X) \rightarrow 0$ as $n \rightarrow \infty$.
\end{theorem}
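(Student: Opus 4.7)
The plan is to show that $E\{h(X_1, X_2, \mu_n)\} \to 2$ by combining (a) a uniform bound on $h$ from the triangle inequality with (b) a pointwise argument that each term of the ratio approaches $2$. The target then follows directly from the definition \eqref{eq:main_concept}.

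First I would establish that $|h(x_1, x_2, x_3)| \leq 2$ for all triples in $\mathcal{X}^3$. The triangle inequality $d(x_1,x_2) \leq d(x_1,x_3) + d(x_2,x_3)$, squared, yields
\begin{align*}
    d^2(x_1,x_3) + d^2(x_2,x_3) - d^2(x_1,x_2) \geq -2 d(x_1,x_3) d(x_2,x_3),
\end{align*}
while the reverse triangle inequality $d(x_1,x_2) \geq |d(x_1,x_3) - d(x_2,x_3)|$, squared, yields the matching upper bound with $+2 d(x_1,x_3) d(x_2,x_3)$ on the right. Dividing through by $d(x_1,x_3) d(x_2,x_3)$ (on the event included in the indicator in \eqref{eq:h_function}) shows that $h$ is uniformly bounded by $2$ in absolute value, which provides the dominating constant that is trivially integrable against the product measure $P_X \otimes P_X$.

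Next, fix any realization $(X_1, X_2)$ and write $a_n := d(X_1, \mu_n)$, $b_n := d(X_2, \mu_n)$, $c := d(X_1, X_2)$. By hypothesis $a_n, b_n \to \infty$, and the triangle inequality gives $|a_n - b_n| \leq c$. In particular $\mu_n \notin \{X_1, X_2\}$ for all sufficiently large $n$, so eventually the indicator in \eqref{eq:h_function} equals one and
\begin{align*}
    h(X_1, X_2, \mu_n) = \frac{a_n}{b_n} + \frac{b_n}{a_n} - \frac{c^2}{a_n b_n}.
\end{align*}
Since $a_n/b_n \to 1$ (the numerator $a_n$ differs from $b_n$ by a bounded quantity while $b_n \to \infty$) and $c^2/(a_n b_n) \to 0$, the right-hand side tends to $1 + 1 - 0 = 2$. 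Hence $h(X_1, X_2, \mu_n) \to 2$ pointwise on $\mathcal{X}^2$ (the argument is trivially valid also on the null-risk diagonal $X_1 = X_2$, where $h \equiv 2$ for large $n$).

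Combining the two ingredients, the dominated convergence theorem (with dominating constant $2$) implies $E\{h(X_1, X_2, \mu_n)\} \to 2$, and therefore $D(\mu_n; P_X) = 1 - \tfrac{1}{2} E\{h(X_1, X_2, \mu_n)\} \to 0$. I do not anticipate a genuine obstacle here; the only mildly subtle step is verifying the uniform two-sided bound $|h| \leq 2$ in full generality (not just in Euclidean spaces where it reduces to $|2\cos\theta| \leq 2$), but this follows cleanly from squaring both forms of the triangle inequality.
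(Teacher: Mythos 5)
Your proof is correct. The underlying decomposition is the same as the paper's: the ratio is split as $a_n/b_n + b_n/a_n - c^2/(a_n b_n)$ (the paper's $R_{n2}, R_{n3}, R_{n4}$), each term is sent to its obvious limit, and the uniform bound $|h|\leq 2$ (established exactly as in the proof of Theorem \ref{theo:existence}) is used to exchange limit and expectation. Where you genuinely diverge is the mode of convergence: the paper builds a small probabilistic framework (``D-sequences'' and bounded random variables) to show $B/A_n = o_p(1)$ and hence that the integrand is $2 + o_p(1)$, then concludes via uniform boundedness. You instead observe that for each \emph{fixed} realization $(x_1, x_2)$ the quantity $d(x_1,\mu_n)\to\infty$ deterministically by the definition of a divergent sequence, so the integrand converges to $2$ at every point of $\mathcal{X}^2$, and dominated convergence finishes. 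This is the more elementary and arguably sharper route: pointwise-everywhere convergence is strictly stronger than the convergence in probability the paper establishes, and it eliminates the D-sequence machinery (which occupies most of the paper's proof) entirely. The one step worth stating explicitly in a final write-up is the handling of the indicator, which you do address: for fixed $(x_1,x_2)$ the indicator $\mathbb{I}(\mu_n \notin \{x_1,x_2\})$ equals $1$ for all sufficiently large $n$ since $d(x_1,\mu_n)\to\infty$, so it does not disturb the pointwise limit.
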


The result of Theorem \ref{theo:divergent_sequence} is very unsurprising when one considers it as a ``limiting case'' of part (i) of Theorem \ref{theo:range}: when the diverging sequence $\mu_n$ gets farther and farther away from the bulk of the distribution $P_X$, the observations $X_1, X_2$ and $\mu_n$ become closer and closer to being on a line, and the depth $D(\mu_n; P_X)$ approaches the minimal value given in Theorem \ref{theo:range}.

\subsection{Continuity properties of $D(\mu; P_X)$}

As our next two results, we show that the depth $D$ is continuous both with respect to the point $\mu$ and the distribution $P_X$. As the definition of $D$ involves division by $d(X_1, \mu) d(X_2, \mu)$, these results necessarily require that $P_X$ has no mass at the point $\mu \in \mathcal{X}$ in question. In the following, the notation $\rightsquigarrow$ denotes the weak convergence of probability distributions.

\begin{theorem}\label{theo:continuity_1}
    Fix $\mu \in \mathcal{X}$ and assume that $P_X(\{ \mu \}) = 0$. Let $P_{n}$ be a sequence of probability distributions in $\mathcal{X}$ such that $P_{n} \rightsquigarrow P_X$ as $n \rightarrow \infty$. Then, as $n \rightarrow \infty$, we have
    \begin{align*}
        D(\mu; P_{n}) \rightarrow D(\mu; P_X).
    \end{align*}
\end{theorem}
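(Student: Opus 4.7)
The plan is to reduce the statement to a standard application of the Portmanteau theorem applied to the bounded, almost-everywhere-continuous function $(x_1, x_2) \mapsto h(x_1, x_2, \mu)$ on the product space $\mathcal{X}^2$, using the product weak convergence $P_n \otimes P_n \rightsquigarrow P_X \otimes P_X$.

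First I would establish that $h(\cdot, \cdot, \mu)$ is uniformly bounded. On the set where $x_1 \neq \mu$ and $x_2 \neq \mu$, writing $a = d(x_1, \mu)$, $b = d(x_2, \mu)$, $c = d(x_1, x_2)$, the triangle inequality gives $|a - b| \leq c \leq a + b$; squaring yields $|a^2 + b^2 - c^2| \leq 2ab$, so $|h| \leq 2$. Since the indicator forces $h = 0$ on $\{x_1 = \mu\} \cup \{x_2 = \mu\}$, the bound $|h(\cdot,\cdot,\mu)| \leq 2$ holds pointwise on $\mathcal{X}^2$ (and this is already implicit in the proof of Theorem \ref{theo:range}).

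Next I would verify continuity $P_X \otimes P_X$-almost everywhere. Let $U := \{(x_1, x_2) \in \mathcal{X}^2 : x_1 \neq \mu, \, x_2 \neq \mu\}$; this set is open. On $U$ the denominator $d(x_1, \mu) d(x_2, \mu)$ is strictly positive, the indicator $\mathbb{I}(\mu \notin \{x_1, x_2\})$ is locally constant and equal to $1$, and the numerator is a continuous function of $(x_1, x_2)$ since $d$ is continuous. Hence $h(\cdot, \cdot, \mu)$ is continuous at every point of $U$. The assumption $P_X(\{\mu\}) = 0$ gives
\begin{align*}
    (P_X \otimes P_X)(U^c) \leq P_X(\{\mu\}) + P_X(\{\mu\}) = 0,
\end{align*}
so $h(\cdot, \cdot, \mu)$ is continuous $(P_X \otimes P_X)$-almost everywhere.

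Finally, since $(\mathcal{X}, d)$ is separable, $P_n \rightsquigarrow P_X$ implies $P_n \otimes P_n \rightsquigarrow P_X \otimes P_X$ on $\mathcal{X}^2$ by a standard product argument. The Portmanteau theorem, in the version for bounded measurable functions whose set of discontinuities has limit-measure zero, then gives
\begin{align*}
    \int h(x_1, x_2, \mu) \, d(P_n \otimes P_n)(x_1, x_2) \; \longrightarrow \; \int h(x_1, x_2, \mu) \, d(P_X \otimes P_X)(x_1, x_2),
\end{align*}
and dividing by $2$ and subtracting from $1$ gives exactly $D(\mu; P_n) \to D(\mu; P_X)$. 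The only real obstacle is the bookkeeping around the continuity set: one must check that the indicator in the definition of $h$ does not create additional discontinuities inside $U$ (it does not, since it is locally constant there) and that the remaining locus of discontinuity has zero product measure, both of which rely crucially on the hypothesis $P_X(\{\mu\}) = 0$.
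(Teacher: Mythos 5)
Your proposal is correct and follows essentially the same route as the paper's own proof: both pass to the product measure on $\mathcal{X}^2$, observe that $h(\cdot,\cdot,\mu)$ is bounded and $(P_X \otimes P_X)$-a.e.\ continuous thanks to $P_X(\{\mu\})=0$, and conclude via the (extended) continuous mapping theorem together with bounded convergence of expectations. Your write-up is somewhat more explicit about the role of separability in obtaining $P_n \otimes P_n \rightsquigarrow P_X \otimes P_X$ and about locating the discontinuity set, but there is no substantive difference in approach.
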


We demonstrate the use of Theorem \ref{theo:continuity_1} in Section \ref{subsec:example_circle} to compute the depths of points in the unit circle under the arc length distance through a limiting argument. 


\begin{theorem}\label{theo:continuity_2}
    Fix $\mu \in \mathcal{X}$ and assume that $P_X(\{ \mu \}) = 0$. Let $\mu_n$ be a sequence in $\mathcal{X}$ such that $\mu_n \rightarrow \mu$ in the metric $d$ as $n \rightarrow \infty$. Then, as $n \rightarrow \infty$, we have
    \begin{align*}
        D(\mu_n; P_X) \rightarrow D(\mu; P_X).
    \end{align*}
\end{theorem}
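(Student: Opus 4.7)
The plan is to apply the dominated convergence theorem to the expectations $\mathrm{E}\{h(X_1, X_2, \mu_n)\}$ appearing in the definition of $D(\mu_n; P_X)$, with $(X_1, X_2) \sim P_X \otimes P_X$. For this I need (a) a uniform integrable majorant and (b) pointwise convergence of the integrand almost surely.

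For (a), I would first establish the uniform bound $|h(x_1, x_2, x_3)| \leq 2$ on all of $\mathcal{X}^3$. Setting $a = d(x_1, x_3)$, $b = d(x_2, x_3)$, $c = d(x_1, x_2)$, the triangle inequality yields $|a - b| \leq c \leq a + b$, hence $(a-b)^2 \leq c^2 \leq (a+b)^2$, which rearranges to $|a^2 + b^2 - c^2| \leq 2ab$. On the support of the indicator in \eqref{eq:h_function} we have $a, b > 0$, so $|h| \leq 2$; the constant function $2$ serves as the required majorant against $P_X \otimes P_X$.

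For (b), since $P_X(\{\mu\}) = 0$, Fubini's theorem gives that the set $A := \{(x_1, x_2) \in \mathcal{X}^2 : x_1 \neq \mu \mbox{ and } x_2 \neq \mu\}$ has full $P_X \otimes P_X$-measure. Fix $(x_1, x_2) \in A$. By continuity of the metric, $d(x_i, \mu_n) \to d(x_i, \mu) > 0$ for $i = 1, 2$; in particular, for all sufficiently large $n$ we have $\mu_n \notin \{x_1, x_2\}$, so the indicator in $h(x_1, x_2, \mu_n)$ equals $1$ eventually, matching the value of the indicator in $h(x_1, x_2, \mu)$. Since the denominator $d(x_1, \mu_n) d(x_2, \mu_n)$ is then bounded below by a positive constant (for $n$ large) and both numerator and denominator in \eqref{eq:h_function} are continuous functions of the three distances $d(x_1, \mu_n), d(x_2, \mu_n), d(x_1, x_2)$, we conclude that $h(x_1, x_2, \mu_n) \to h(x_1, x_2, \mu)$ on $A$.

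Dominated convergence then gives $\mathrm{E}\{h(X_1, X_2, \mu_n)\} \to \mathrm{E}\{h(X_1, X_2, \mu)\}$, and the conclusion follows from \eqref{eq:main_concept}. I do not expect any serious obstacle: the only subtle point is the almost-sure handling of the indicator, which is precisely where the hypothesis $P_X(\{\mu\}) = 0$ enters (it prevents $\mu_n \to \mu$ from causing singular behavior near atoms of $P_X$).
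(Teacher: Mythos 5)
Your proof is correct and follows exactly the route the paper indicates: the paper omits the proof of Theorem \ref{theo:continuity_2}, stating only that it is ``a direct application of Lebesgue's dominated convergence theorem,'' and your argument supplies precisely that, with the bound $|h| \leq 2$ from the proof of Theorem \ref{theo:existence} as the dominating function and the hypothesis $P_X(\{\mu\}) = 0$ handling the indicator almost surely.
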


Theorem \ref{theo:continuity_2} implies, in particular, that the metric spatial depth is continuous in the whole of $\mathcal{X}$ as long as $P_X$ has no atoms. We omit the proof of Theorem \ref{theo:continuity_2} as the result follows from a direct application of Lebesgue's dominated convergence theorem. Later, when discussing the sample version of $D$ in Section \ref{sec:sample}, we present still one more continuity result for the metric spatial depth (root-$n$ consistency).




\subsection{Invariance properties of $D(\mu; P_X)$}

We conclude the section by discussing invariance properties of the metric spatial depth. Assume that the metric $d$ is invariant to a group $\mathcal{G}$ of transformations $g: \mathcal{X} \to \mathcal{X}$. That is, $d(gx_1, gx_2) = d(x_1, x_2)$ for all $g \in \mathcal{G}$, $x_1, x_2 \in \mathcal{X}$. Then, we clearly have $D(g \mu; P_{gX}) = D(\mu; P_{X})$ for any $g \in \mathcal{G}$ where $P_{gX}$ denotes the distribution of $gX$. We give a few concrete examples: (i) If $(\mathcal{X}, d)$ is either the $p$-dimensional Euclidean space or the $p$-dimensional unit sphere equipped with the arc length metric, then the depth $D$ is invariant (in the previous sense) to orthogonal transformations. (ii) If $\mathcal{X} = \{ 0, 1 \}^p$ and $d$ is the Hamming distance, then $D$ is invariant to any bit-flip operations. (iii) If $\mathcal{X}$ is the space $\mathcal{S}^p$ of $p \times p$ positive-definite symmetric matrices and $d$ is the corresponding Riemannian distance \citep{bhatia2009positive}, then $D$ is invariant to any transformations of the form $S \mapsto A S A'$ where $S \in \mathcal{S}^p$ and $A \in \mathbb{R}^{p \times p}$ is invertible.

Assume now that $P_X$ is such that $P_X = P_{gX}$ for all $g \in \mathcal{G}$. Then, by the previous paragraph, we have $D(g_1 \mu, P_{X}) = D(g_2 \mu, P_{X})$ for all $g_1, g_2 \in \mathcal{G}$. I.e., the metric spatial depth is constant on the orbit $\{ g \mu \mid g \in \mathcal{G} \}$ of any point $\mu \in \mathcal{X}$. As an example of this, we continue our example scenario (i) above: any $p$-variate random vector $X$ in a Euclidean space satisfying $X \sim OX$ for all $p \times p$ orthogonal matrices $O$ is said to be spherical \citep{fang1990symmetric} As the orbits of the group of orthogonal transformations are origin-centered hyperspheres, the contours of the metric spatial depth $D$ for a spherical distribution are thus sphere-shaped around the origin.

We note that the previous properties are somewhat trivial, being inherited directly from the metric $d$ and not from our definition of the metric spatial depth $D$. However, it is also obvious that without imposing a stronger structure on the metric space $(\mathcal{X}, d)$ they are the best that we can hope to achieve.


\section{Four example scenarios}\label{sec:examples}

In this section, we study the properties of the metric spatial depth $D(\mu; P_X)$ in four specific metric spaces. We stress that all of these example scenarios might not necessarily be terribly relevant in statistical practice, but rather their purpose here is to (i) give further insight about the behavior and interpretation of the metric spatial depth, and (ii) demonstrate that our proposed concept allows for closed-form solutions in several interesting cases.

\subsection{Hilbert space}\label{subsec:example_hilbert}

Let $(\mathcal{X}, \langle \cdot, \cdot \rangle)$ be a Hilbert space and let $\| \cdot \|$ and $d$ be the norm and the metric induced by the inner product, respectively. The sign function $\mathrm{sgn}: \mathcal{X} \to \mathcal{X}$ is defined as $\mathrm{sgn}(x) = \mathrm{I}(x \neq 0) x/\| x \|$. Let $X$ be a random element in $\mathcal{X}$ satisfying $\mathrm{E} \| X \| < \infty$. Then we define the expected value of $X$ in the usual way through the Riesz representation theorem \citep{conway1990course}, as the unique element $\mathrm{E}(X) \in \mathcal{X}$ satisfying $\mathrm{E} \langle X, \mu \rangle = \langle \mathrm{E}(X), \mu \rangle$ for all $\mu \in \mathcal{X}$.

\begin{lemma}\label{lem:inner_product}
For a Hilbert space $(\mathcal{X}, \langle \cdot, \cdot \rangle)$, the metric spatial depth takes the form,
\begin{align*}
    D(\mu; P_X) = 1 - \left\| \mathrm{E} \left\{ \mathrm{sgn}(X - \mu) \right\} \right\|^2.
\end{align*}
\end{lemma}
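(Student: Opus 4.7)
The plan is to substitute the Hilbert space structure into the definition \eqref{eq:main_concept} of $D(\mu ; P_X)$ and recognize that the numerator of $h$ collapses into an inner product via the polarization identity, after which the factorization of the expectation across the independent $X_1, X_2$ yields the squared norm claimed.

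First, I would write $d(x,y) = \|x - y\|$ and expand, for any $x_1, x_2, x_3 \in \mathcal{X}$ with $x_3 \notin \{x_1, x_2\}$,
\begin{align*}
    d^2(x_1, x_3) + d^2(x_2, x_3) - d^2(x_1, x_2)
    &= \|x_1 - x_3\|^2 + \|x_2 - x_3\|^2 - \|(x_1 - x_3) - (x_2 - x_3)\|^2 \\
    &= 2\langle x_1 - x_3,\, x_2 - x_3 \rangle,
\end{align*}
using bilinearity of the inner product. Dividing by $d(x_1, x_3) d(x_2, x_3) = \|x_1 - x_3\| \|x_2 - x_3\|$ gives
\begin{align*}
    h(x_1, x_2, x_3) = 2\, \langle \mathrm{sgn}(x_1 - x_3),\, \mathrm{sgn}(x_2 - x_3) \rangle.
\end{align*}
This identity automatically extends to the case $x_3 \in \{x_1, x_2\}$, where both sides are zero (the left side by the indicator in \eqref{eq:h_function}, the right side since at least one sign is the zero vector). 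Substituting into \eqref{eq:main_concept}, we obtain
\begin{align*}
    D(\mu; P_X) = 1 - \mathrm{E}\bigl\{\langle \mathrm{sgn}(X_1 - \mu),\, \mathrm{sgn}(X_2 - \mu) \rangle\bigr\}.
\end{align*}

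Next, I would use that $\| \mathrm{sgn}(X - \mu) \| \leq 1$, so the expectation $\mathrm{E}\{ \mathrm{sgn}(X - \mu) \} \in \mathcal{X}$ is well-defined via the Riesz representation theorem, as recalled in the paragraph preceding the lemma. By independence of $X_1$ and $X_2$, and Fubini applied to the real-valued integrable function $(x_1, x_2) \mapsto \langle \mathrm{sgn}(x_1 - \mu), \mathrm{sgn}(x_2 - \mu) \rangle$, we can compute the expectation iteratively. Fixing $x_1$, the defining property of the Riesz expectation gives
\begin{align*}
    \mathrm{E}_{X_2}\bigl\{ \langle \mathrm{sgn}(x_1 - \mu),\, \mathrm{sgn}(X_2 - \mu) \rangle \bigr\} = \bigl\langle \mathrm{sgn}(x_1 - \mu),\, \mathrm{E}\{\mathrm{sgn}(X - \mu)\} \bigr\rangle,
\end{align*}
and applying the same property a second time in $x_1$ yields
\begin{align*}
    \mathrm{E}\bigl\{\langle \mathrm{sgn}(X_1 - \mu),\, \mathrm{sgn}(X_2 - \mu) \rangle\bigr\} = \bigl\| \mathrm{E}\{\mathrm{sgn}(X - \mu)\} \bigr\|^2.
\end{align*}

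Substituting back into the expression for $D(\mu ; P_X)$ produces the claimed formula. The only technical point worth care is the double application of the Riesz representation characterization of $\mathrm{E}\{\mathrm{sgn}(X-\mu)\}$, which is justified by the boundedness of $\mathrm{sgn}$ and independence; this is the main (mild) obstacle, as the rest of the argument is a direct polarization computation.
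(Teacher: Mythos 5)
Your proposal is correct and follows essentially the same route as the paper's proof: expand the squared distances via the inner product to get $h(x_1,x_2,\mu) = 2\langle \mathrm{sgn}(x_1-\mu), \mathrm{sgn}(x_2-\mu)\rangle$, then factor the expectation of the inner product of independent, bounded random elements using the Riesz-representation definition of the expectation. The only cosmetic difference is that you invoke the polarization identity by name and spell out the Fubini/iterated-expectation step slightly more explicitly, which the paper leaves implicit.
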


Lemma \ref{lem:inner_product} carries with it several insights: (i) The expression $D(\mu; P_X)$ is based on the same key quantity as the classical $L_1$-depth \citep{vardi2000multivariate}. I.e., the depth of a point is determined by the expected length of an unit vector drawn from it toward a point generated randomly from $P_X$, see Figure~\ref{fig:L1_depth} for an illustration. The $L_1$-depth is also known as the spatial depth, see, e.g., \cite{serfling2006depth}, justifying naming our proposed concept the ``metric spatial depth''. (ii) For a distribution $P_X$ on the real line (1-dimensional Euclidean space), the metric spatial depth is seen to further reduce to
\begin{align*}
    D(\mu; P_X) = 1 - \{ P(X > \mu) - P(X < \mu) \}^2,    
\end{align*}
that is, the depth of a point $\mu$ is in this case fully determined by its quantile level. As discussed in \cite{geenens2023statistical}, this is desirable since $\mathbb{R}$ has a natural concept of order (quantiles) and, hence, any reasonable measure of depth therein should be based on this order structure. (iii) In a Hilbert space, such as $\mathbb{R}^p$ equipped with the Euclidean inner product, the maximal achievable depth is~1. This value is obtained if and only if $\mu$ is the center of the distribution $P_X$ in the sense that the unit length vector drawn from it towards $X$ has on average length $0$.

\begin{figure}[t]
    \centering
    \includegraphics[width=0.9\textwidth]{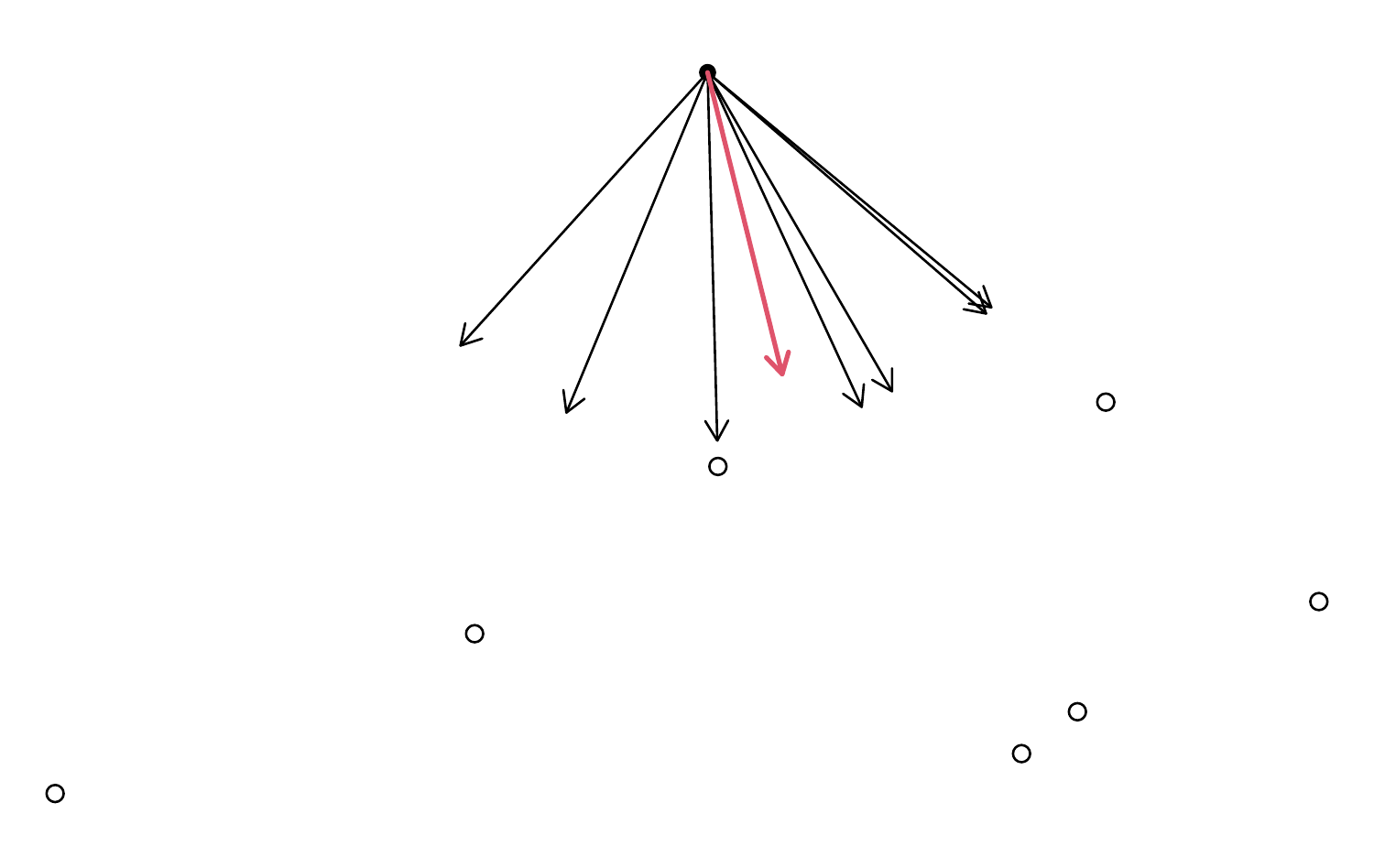}
    \caption{Assume that $P_X$ puts equal mass to all seven circles shown in the plot. The black arrows are unit length vectors drawn from $\mu$ (the solid black point) towards these point masses. The red shorter arrow depicts the average of these vectors and its length ($\approx 0.843$) determines the depth of $\mu$, larger values indicating smaller depths. As $\mu$ lies outside of the sample, a rather small depth, $D(\mu; P_X) \approx 0.289$, is now obtained.}
    \label{fig:L1_depth}
\end{figure}

\subsection{British rail metric}\label{subsec:example_rail}

As our next example, we consider a more pathological case where the theoretical maximum depth value 2 can be reached. Let $\mathcal{X} = \mathbb{R}^2$ be the plane and take the metric $d$ to be the British rail metric,
\begin{align*}
    d(x_1, x_2) = \mathbb{I}(x_1 \neq x_2) (\| x_1 \| + \| x_2 \|),
\end{align*}
where $\| \cdot \|$ denotes the Euclidean norm. This metric space models the situation where all train trips in the Southern England have to go through London (located in the origin).
Let next $P_X$ be any distribution in $\mathbb{R}^2$ having no atoms. Then $d(X_1, X_2)$ equals almost surely $\| X_1 \| + \| X_2 \|$ and the depth of a point $\mu \in \mathcal{X}$ takes the form
\begin{align*}
    D(\mu; P_X) :=  \mathrm{E} \left\{ \frac{ 2 \| X_1 \| \| X_2 \|  }{(\| X_1 \| + \| \mu \|)(\| X_2 \| + \| \mu \|) } \right\},
\end{align*}
showing that $D(0; P_X) = 2$.

The British rail metric appears also as the solution to the following optimality problem: Assume that $P_X$ puts equal mass $1/n$ to each of $n$ distinct points $x_1, \ldots, x_n \in \mathcal{X}$. Then, under what metric is  $D(x_1; P_X)$ maximized? In other words, which combination of distances between  $n$ equiprobable distinct points makes $x_1$ as ``central'' as possible? The following lemma reveals that the answer is given by the British rail metric.

\begin{lemma}\label{lem:rail_optimality}
    Let $P_X$ put mass $1/n$ to each of $n$ distinct points $x_1, \ldots, x_n \in \mathcal{X}$. Then,
    \begin{align*}
        D(x_1; P_X) \leq 1 + \left(1 - \frac{1}{n} \right)\left(1 - \frac{3}{n} \right),
    \end{align*}
    with equality if and only if $d(x_i, x_j) = d(x_i, x_1) + d(x_1, x_j)$ for all distinct $i, j$ in $\{ 2, \ldots, n \}$.
\end{lemma}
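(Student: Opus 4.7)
The plan is to compute $\mathrm{E}\{h(X_1, X_2, x_1)\}$ directly as a finite double sum and then apply the triangle inequality termwise. Since $X_1, X_2$ are independent with $P(X_k = x_i) = 1/n$, we have
\begin{align*}
    \mathrm{E}\{h(X_1, X_2, x_1)\} = \frac{1}{n^2} \sum_{i,j=1}^n h(x_i, x_j, x_1).
\end{align*}
The indicator in \eqref{eq:h_function} kills every term with $i=1$ or $j=1$, so only indices $i, j \in \{2, \ldots, n\}$ contribute. I would then split this sum according to whether $i = j$ or $i \neq j$.

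For the diagonal terms ($i = j \in \{2, \ldots, n\}$), the definition gives $h(x_i, x_i, x_1) = 2 d^2(x_i, x_1)/d^2(x_i, x_1) = 2$, contributing $2(n-1)$ to the sum. For the off-diagonal terms, writing $a_i := d(x_i, x_1)$ and using the triangle inequality $d(x_i, x_j) \leq a_i + a_j$, I get
\begin{align*}
    h(x_i, x_j, x_1) = \frac{a_i^2 + a_j^2 - d^2(x_i, x_j)}{a_i a_j} \geq \frac{a_i^2 + a_j^2 - (a_i + a_j)^2}{a_i a_j} = -2,
\end{align*}
with equality if and only if $d(x_i, x_j) = a_i + a_j$, i.e., $L[x_i, x_1, x_j]$. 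Summing this lower bound over the $(n-1)(n-2)$ off-diagonal pairs and combining with the diagonal contribution yields
\begin{align*}
    \mathrm{E}\{h(X_1, X_2, x_1)\} \geq \frac{2(n-1) - 2(n-1)(n-2)}{n^2} = -\frac{2(n-1)(n-3)}{n^2}.
\end{align*}
Substituting into $D(x_1; P_X) = 1 - \tfrac{1}{2} \mathrm{E}\{h\}$ produces the claimed bound $1 + (1 - 1/n)(1 - 3/n)$.

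The equality characterization follows since the bound was obtained by a termwise inequality that is tight exactly when $L[x_i, x_1, x_j]$ holds for every off-diagonal pair $i, j \in \{2, \ldots, n\}$, which is precisely the condition $d(x_i, x_j) = d(x_i, x_1) + d(x_1, x_j)$. There is no real obstacle here; the only thing to be a little careful about is correctly handling the case $X_1 = X_2 = x_i$ with $i \neq 1$, where the indicator in \eqref{eq:h_function} does \emph{not} vanish and one must verify by direct substitution that $h$ equals $2$, not an undefined $0/0$.
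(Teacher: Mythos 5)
Your proposal is correct and follows essentially the same route as the paper's own proof: reduce the expectation to a double sum over $i,j\in\{2,\ldots,n\}$ via the indicator, note that the $(n-1)$ diagonal terms each contribute $2$, bound each of the $(n-1)(n-2)$ off-diagonal terms below by $-2$ via the squared triangle inequality, and read off the equality condition termwise. Your explicit check that the diagonal terms equal $2$ (rather than an undefined $0/0$) matches the paper's remark that these pairs are ``easily checked to contribute $2$.''
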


\begin{figure}
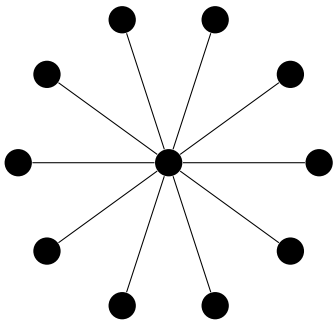

    \centering
    \stargraph{10}{2} 
\caption{The picture displays a star graph on $n = 11$ points under which $x_1$ (the central point in the graph) achieves the maximal depth $D(x_1; P_X) = 2 - 41/121 \approx 1.661$. The distances in the graph are proportional to the edges, each of which has the same length.}
    \label{fig:star_graph}
\end{figure}

The equality condition in Lemma \ref{lem:rail_optimality} corresponds to a situation where the shortest route between any two points always goes through $x_1$ (``London''). Interestingly, the actual distances between the points do not play a role in the solution, as long as they are positive. One particular, symmetric solution achieving the upper bound has been visualized in Figure~\ref{fig:star_graph}. As is expected, the upper bound in Lemma \ref{lem:rail_optimality} approaches the value $2$ as $n \rightarrow \infty$.


\subsection{Discrete metric in a finite space}\label{subsec:example_discrete}

To gain further intuition on the metric spatial depth, we consider one of the simplest possible spaces, a finite set $\mathcal{X} := \{ 1, \ldots, n \}$ equipped with the discrete metric $d(i, j) = \mathbb{I}(i \neq j)$. Any probability distribution $P_X$ on $\mathcal{X}$ is then characterized by the probabilities $p_1, \ldots , p_n$ of the $n$ points. Our next result gives the values of the metric spatial depth in this scenario.

\begin{lemma}\label{lem:finite_set}
    In a finite set $\mathcal{X} = \{ 1, \ldots, n \}$ equipped with the discrete metric $d(i, j) = \mathbb{I}(i \neq j)$, we have
    \begin{align*}
        D(1;P_X) = \frac{1}{2} \left( 1 - \sum_{i = 1}^n p_i^2 \right) + p_1 \in [0, 1].
    \end{align*}
    Furthermore,
    \begin{itemize}
        \item[(i)] $D(1; P_X) = 0$ if and only if exactly one of $p_2, \ldots , p_n$ equals $1$.
        \item[(ii)] $D(1; P_X) = 1$ if and only if $p_1 = 1$.
    \end{itemize}
\end{lemma}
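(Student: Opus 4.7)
The plan is to evaluate the expectation defining $D(1; P_X)$ by exploiting the extreme simplicity of the discrete metric: since $d(i,j)\in\{0,1\}$, we have $d^2=d$, and all the quantities inside $h$ collapse to a few cases. First I would fix $\mu=1$ and observe that the indicator $\mathbb{I}(\mu\notin\{X_1,X_2\})$ in \eqref{eq:h_function} forces the only nonzero contributions to the expectation to come from the event $A=\{X_1\neq 1,\,X_2\neq 1\}$, on which $d(X_1,1)=d(X_2,1)=1$ and $d(X_1,X_2)=\mathbb{I}(X_1\neq X_2)$. Splitting $A$ according to whether $X_1=X_2$ or not gives $h=2$ on $\{X_1=X_2\neq 1\}$ and $h=1$ on $\{X_1\neq X_2,\,X_1\neq 1,\,X_2\neq 1\}$.

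Next I would compute the probabilities of these two events in terms of the $p_i$: the former has probability $\sum_{i=2}^n p_i^2$ and the latter equals $(1-p_1)^2 - \sum_{i=2}^n p_i^2$, giving
\begin{align*}
    \mathrm{E}\{h(X_1,X_2,1)\} = 2\sum_{i=2}^n p_i^2 + (1-p_1)^2 - \sum_{i=2}^n p_i^2 = (1-p_1)^2 + \sum_{i=2}^n p_i^2.
\end{align*}
Substituting this into \eqref{eq:main_concept} and expanding $(1-p_1)^2 = 1 - 2p_1 + p_1^2$, a brief rearrangement yields $D(1;P_X) = \tfrac{1}{2}(1 - \sum_{i=1}^n p_i^2) + p_1$, which is the stated closed form.

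For the bound $D(1;P_X)\in[0,1]$, nonnegativity is immediate from Theorem \ref{theo:range}, while the upper bound $D(1;P_X)\leq 1$ reduces to $2p_1 \leq 1 + \sum_{i=1}^n p_i^2$; using the trivial bound $\sum_{i=1}^n p_i^2 \geq p_1^2$, this follows from the elementary inequality $2p_1\leq 1+p_1^2=(1-p_1)^2+2p_1$. For (i) and (ii) I would argue directly from the closed form rather than invoking the general Theorem \ref{theo:range}. For (ii), equality $D(1;P_X)=1$ forces $2p_1 - 1 = \sum_{i=1}^n p_i^2 \geq p_1^2$, i.e.\ $(p_1-1)^2\leq 0$, so $p_1=1$; the converse is immediate by substitution. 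For (i), $D(1;P_X)=0$ requires both summands $\tfrac{1}{2}(1-\sum p_i^2)$ and $p_1$ to vanish (both being nonnegative), hence $p_1=0$ and $\sum_{i=1}^n p_i^2 = 1$, which together with $\sum p_i = 1$ and $p_i\geq 0$ force exactly one of $p_2,\ldots,p_n$ to equal $1$; the converse direction is again by direct substitution.

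There is no real obstacle here: the metric is so coarse that the whole computation is essentially a two-case enumeration. The only mildly non-obvious step is the upper bound $D\leq 1$, where one must notice that the natural inequality $\sum p_i^2\geq p_1^2$ combined with $(1-p_1)^2\geq 0$ does the job; this is also what makes the equality analysis in (ii) collapse cleanly to $p_1=1$.
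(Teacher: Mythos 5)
Your proposal is correct and follows essentially the same route as the paper: the closed form comes from the same two-case enumeration (the paper simply calls it ``straightforwardly derived''), and your upper-bound argument via $\sum_i p_i^2 \geq p_1^2$ together with $(1-p_1)^2 \geq 0$ is algebraically identical to the paper's rewriting $D(1;P_X) = 1 - \tfrac{1}{2}(1-p_1)^2 - \tfrac{1}{2}(s_2 - p_1^2)$, as is your lower-bound/equality analysis for (i). No gaps.
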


Lemma \ref{lem:finite_set} shows that the extremal depth values are obtained only in cases where the probability mass is concentrated solely at a single point. This is expected as the only way to make a point outlying under a discrete metric where all distances are of equal length, is to rob it of its probability mass. Similarly, as ``location'' carries no meaning under a discrete metric, a point's centrality is measured solely in its (relative) probability mass.

\subsection{Uniform distribution on the unit circle}\label{subsec:example_circle}

Our final example involves the metric space $(\mathbb{S}^1, d)$, studied commonly in directional statistics \citep{mardia2000directional}, where $\mathbb{S}^1$ is the unit circle in $\mathbb{R}^2$ and $d$ is the arc length distance. The next result gives a closed form solution to the metric spatial depth under the assumption of a uniform distribution on $\mathbb{S}^1$. As integration of the arc length distance function $d$ is a non-trivial task, the proof instead relies on constructing a sequence of discrete uniform distributions on the circle and invoking Theorem~\ref{theo:continuity_1}.

\begin{theorem}\label{theo:cycle_graph}
Let $P_X$ be the uniform distribution on the unit circle $\mathbb{S}^1$. Then, for any point $x \in \mathbb{S}^1$, we have,
\begin{align*}
    D(x ; P_X) = \pi^2/6 - 1 \approx 0.6449.
\end{align*}
\end{theorem}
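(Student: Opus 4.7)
The plan is to exploit rotational invariance, use a discrete approximation to justify the limit cleanly (as the paper hints), and then reduce the resulting expectation to a classical dilogarithm evaluation. By the $SO(2)$-invariance of both $d$ and $P_X$, the depth $D(x; P_X)$ is independent of $x$, so I would fix any base point. I would then take $P_n$ to be the uniform distribution on $n$ equally spaced points of $\mathbb{S}^1$, note that $P_n \rightsquigarrow P_X$, and invoke Theorem~\ref{theo:continuity_1} (valid since $P_X(\{x\}) = 0$) to get $D(x; P_n) \to D(x; P_X)$. The purpose of the discretization is mainly to sidestep any issues coming from the apparent singularity of $h$ at $x$; the integrand is in fact bounded by $2$, so the Riemann sums converge to the corresponding double integral.

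To evaluate that limit I would parameterize the circle by signed arc length $s \in (-\pi, \pi]$ from $x$, so that $P_X$ corresponds to $s \sim \mathrm{Unif}(-\pi, \pi]$, $d(y, x) = |s_y|$, and $d(y_1, y_2) = \min(|s_1 - s_2|, 2\pi - |s_1 - s_2|)$. I then split $\mathrm{E}\{h(X_1, X_2, x)\}$ by the signs of $s_1, s_2$. When the signs agree (probability $1/2$), a direct expansion gives $d^2(X_1, x) + d^2(X_2, x) - d^2(X_1, X_2) = 2 d(X_1, x) d(X_2, x)$ identically, so $h \equiv 2$. When they disagree, writing $a = |s_1|, b = |s_2| \in (0, \pi]$, the arc distance equals $a + b$ if $a + b \le \pi$ (giving $h = -2$) and $2\pi - (a + b)$ otherwise (giving $h = -2 + 4\pi(a+b-\pi)/(ab)$).

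After substituting $u = a/\pi, v = b/\pi$, the non-trivial portion of the expectation reduces to evaluating
\begin{align*}
    K \;=\; \iint_{\substack{u + v > 1 \\ (u, v) \in (0, 1]^2}} \frac{u + v - 1}{uv}\, du\, dv.
\end{align*}
Decomposing the integrand as $1/u + 1/v - 1/(uv)$ and performing the inner integration over $v \in [1-u, 1]$ produces $1 + (1-u)\ln(1-u)/u$. The remaining outer integral then hinges on the classical identity $\int_0^1 \ln(w)/(1-w)\, dw = -\pi^2/6$, equivalent to the Basel sum $\sum n^{-2} = \pi^2/6$. This gives $K = 2 - \pi^2/6$, which back-substitutes to $\mathrm{E}\{h(X_1, X_2, x)\} = \tfrac{1}{2}(2) + \tfrac{1}{2}(6 - 2\pi^2/3) = 4 - \pi^2/3$, and hence $D(x; P_X) = \pi^2/6 - 1$.

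The main obstacle is recognizing and evaluating the dilogarithm/Basel integral that is the source of the $\pi^2/6$; beyond this, the work is bookkeeping around the piecewise structure of the arc length distance. The discretization and Theorem~\ref{theo:continuity_1} play the supporting role of making the passage from the finite sums to this double integral entirely rigorous.
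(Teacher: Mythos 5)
Your proposal is correct; I checked the case analysis and the integral evaluation and they all hold: the same-sign case does give $h\equiv 2$, the opposite-sign case gives $-2$ or $-2+4\pi(a+b-\pi)/(ab)$ according as $a+b\le\pi$ or not, the inner integral is $1+(1-u)\ln(1-u)/u$, and the substitution $w=1-u$ together with $\int_0^1 \ln w/(1-w)\,dw=-\pi^2/6$ and $\int_0^1\ln w\,dw=-1$ yields $K=2-\pi^2/6$, hence $\mathrm{E}\{h\}=4-\pi^2/3$ and $D(x;P_X)=\pi^2/6-1$. However, your computational core is genuinely different from the paper's. The paper never touches a continuous integral: it computes $D(0;P_n)$ for the equispaced $2k$-point distribution \emph{exactly} via a combinatorial sum, massages it into harmonic numbers, uses the identity $2\sum_{j\le m}H_j/j=(H_m)^2+\sum_{j\le m}j^{-2}$ to arrive at $D(0;P_n)=-1+1/k-1/(4k^2)+\sum_{j=1}^k j^{-2}$, and then lets $k\to\infty$ via the Basel sum and Theorem~\ref{theo:continuity_1}. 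You instead evaluate the limiting double integral directly, with the $\pi^2/6$ emerging from the dilogarithm integral $\int_0^1\ln w/(1-w)\,dw$ rather than from $\sum j^{-2}$; the two are of course the same identity in different clothing. Your route is shorter and arguably cleaner, and it makes the discretization essentially redundant: since $|h|\le 2$ (proof of Theorem~\ref{theo:existence}) and $P_X$ has no atoms, $\mathrm{E}\{h(X_1,X_2,x)\}$ is a perfectly well-defined bounded Lebesgue integral that you can compute directly, so you do not actually need $P_n$ or Theorem~\ref{theo:continuity_1} at all --- keeping them is harmless (the Riemann sums do converge, as $h$ is bounded and a.e.\ continuous, and the diagonal terms carry weight $O(1/n)$), but you could state the proof without them. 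What the paper's discrete computation buys in exchange for its extra length is an exact closed form for the finite equispaced case, which is of some independent interest.
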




\section{Sample metric spatial depth}\label{sec:sample}

Let $X_1, \ldots, X_n$ be a random sample from the distribution $P_X$. Denote the empirical distribution of the sample by $P_n$. The sample metric spatial depth $D(\mu; P_n)$ of the point $\mu \in \mathcal{X}$ is given by
\begin{align*}
    D(\mu; P_n) = 1 - \frac{1}{2 n^2} \sum_{i, j = 1}^n h(X_i, X_j, \mu),
\end{align*}
where $h$ is defined as in \eqref{eq:h_function}. By isolating the terms with $i \neq j$ in the double sum, we see that $D(\mu; P_n)$ is asymptotically equivalent to a second order U-statistic, guaranteeing that $ D(\mu; P_n)$ is a root-$n$-consistent estimator of the population depth $D(\mu; P_X)$.

\begin{theorem}
    For a fixed $\mu \in \mathcal{X}$, we have,
    \begin{align*}
        D(\mu; P_n) = D(\mu; P_X) + \mathcal{O}\left( \frac{1}{\sqrt{n}} \right),
    \end{align*}
    as $n \rightarrow \infty$.
\end{theorem}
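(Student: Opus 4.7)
The plan is to exploit the near-U-statistic structure of the sample depth, as explicitly suggested in the excerpt. Writing
\[
D(\mu; P_n) = 1 - \frac{1}{2n^2} \sum_{i,j=1}^n h(X_i, X_j, \mu),
\]
I would split the double sum into its diagonal ($i=j$) and off-diagonal ($i \neq j$) parts. On the diagonal, setting $x_1 = x_2$ in \eqref{eq:h_function} reduces the indicator to $\mathbb{I}(x_3 \neq x_1)$, and a direct computation gives $h(X_i, X_i, \mu) = 2 \mathbb{I}(X_i \neq \mu)$. The diagonal thus contributes $O(1/n)$ to $D(\mu; P_n)$, which is absorbed into the target $O(1/\sqrt{n})$ error.

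The off-diagonal part equals, up to a factor $(n-1)/n = 1 + O(1/n)$, the second-order U-statistic
\[
U_n := \binom{n}{2}^{-1} \sum_{i < j} h(X_i, X_j, \mu)
\]
with symmetric kernel $(x_1, x_2) \mapsto h(x_1, x_2, \mu)$, whose expectation equals $2 - 2 D(\mu; P_X)$. The key technical step is the deterministic pointwise bound $|h(x_1, x_2, x_3)| \leq 2$ on all of $\mathcal{X}^3$. Writing $a = d(x_1, x_3)$, $b = d(x_2, x_3)$, $c = d(x_1, x_2)$, the lower bound $h \geq -2$ follows from squaring the triangle inequality $c \leq a + b$, and the upper bound $h \leq 2$ follows from squaring the reverse triangle inequality $|a - b| \leq c$ and rearranging.

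With a uniformly bounded symmetric kernel, classical U-statistic theory applies directly: the standard variance bound gives $\mathrm{Var}(U_n) = O(1/n)$, whence Chebyshev's inequality yields $U_n - \mathrm{E}\{h(X_1, X_2, \mu)\} = O_p(1/\sqrt{n})$ (an $L^2$ reading of the big-O follows from the variance bound alone). Recombining the diagonal and off-diagonal contributions, and using $\mathrm{E}\{h(X_1, X_2, \mu)\} = 2 - 2 D(\mu; P_X)$, completes the argument. The only genuine obstacle is the pointwise bound on $h$; once established, it is precisely what permits the argument to proceed without any moment assumption on $P_X$, matching the robustness stance of Theorem \ref{theo:existence}.
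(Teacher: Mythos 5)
Your proposal is correct and follows exactly the route the paper itself indicates (the paper only sketches this proof in one sentence in Section 4): isolate the diagonal terms, which contribute $\mathcal{O}(1/n)$, recognize the off-diagonal part as a second-order U-statistic with the symmetric kernel $(x_1,x_2)\mapsto h(x_1,x_2,\mu)$, and use the uniform bound $|h|\leq 2$ already established in the proof of Theorem \ref{theo:existence} to get the $\mathcal{O}(1/n)$ variance bound and hence root-$n$ consistency without any moment assumptions. Your computation $h(x,x,\mu)=2\,\mathbb{I}(x\neq\mu)$ and the identification $\mathrm{E}\{h(X_1,X_2,\mu)\}=2-2D(\mu;P_X)$ are both accurate, so nothing is missing.
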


The U-statistic theory \citep{lee1990u} further guarantees that $\sqrt{n} \{ D(\mu; P_n) - D(\mu; P_X) \}$ admits a limiting normal distribution. Pursuing such finer asymptotic properties of $D(\mu; P_n)$ would allow, e.g., testing hypotheses of the form $H_0: D(\mu; P_X) = \alpha$, for some point $\mu$ and depth level $\alpha \in [0, 2]$. However, such tests are rather impractical in the general situation as the range of $D(\mu, P_X)$ depends on the metric space in question, see Section \ref{subsec:example_hilbert} for an example. Thus, to test, for instance, whether $\mu \in \mathcal{X}$ is the deepest point w.r.t. $P_X$ necessarily requires knowing the minimal value of the map $\mu \mapsto D(\mu; P_X)$. Consequently, in this work, to stay as general as possible, we have chosen not to study such questions. 




In practice, for a realized sample $X_1, \ldots, X_n$, computing $D(\mu; P_n)$ for a single value of $\mu$ requires two nested loops over the sample, leading to time complexity of order $\mathcal{O}(n^2)$. As there does not appear to exist a straightforward connection between the terms involved in computing, e.g., $D(X_1, P_n)$ and $D(X_2, P_n)$, finding the depths of the full sample is thus an $\mathcal{O}(n^3)$-operation. Such complexities are rather standard for methods based on distance matrices, see, e.g., \cite{cholaquidis2020weighted, dai2022tukey}.



\section{Examples}\label{sec:data_examples}

We next exemplify the use of metric spatial depth in three tasks: outlier detection, non-convex depth region estimation and classification. The used methods are implemented in \texttt{R} and their program codes are available on the author's webpage.

\subsection{Outlier detection}

In our outlier detection task, we study how well different methods are able to separate outliers from the bulk of a distribution. We generate $10$-dimensional points as $x_i \sim \mathcal{N}_{10}(\lambda 1_{10}, \mathrm{diag}(I_{10}))$ and transform them as $x_i \mapsto x_i/\| x_i \|$, meaning that the data lies on the unit sphere in $\mathbb{R}^{10}$. To capture the geometry of the space, we use the arc length distance as our metric. We create two groups, the bulk of size $(1 - \varepsilon)n$ and the outlier group of size $\varepsilon n$ where the value $-\lambda$ is used for the location of the outlying group in place of $\lambda$ (i.e., the bulk and the outlier group reside on the opposite sides of the sphere). We consider three different values of lambda, $\lambda = 1/4, 1/3, 1/2$, two different sample sizes, $n = 50, 100$ and a range of different proportions of outliers, $\varepsilon = 0.01, 0.02, \ldots, 0.30$. A similar setting was used in  \cite{heinonen2023robust}.


\begin{figure}[t]
    \centering
    \includegraphics[width=1\textwidth]{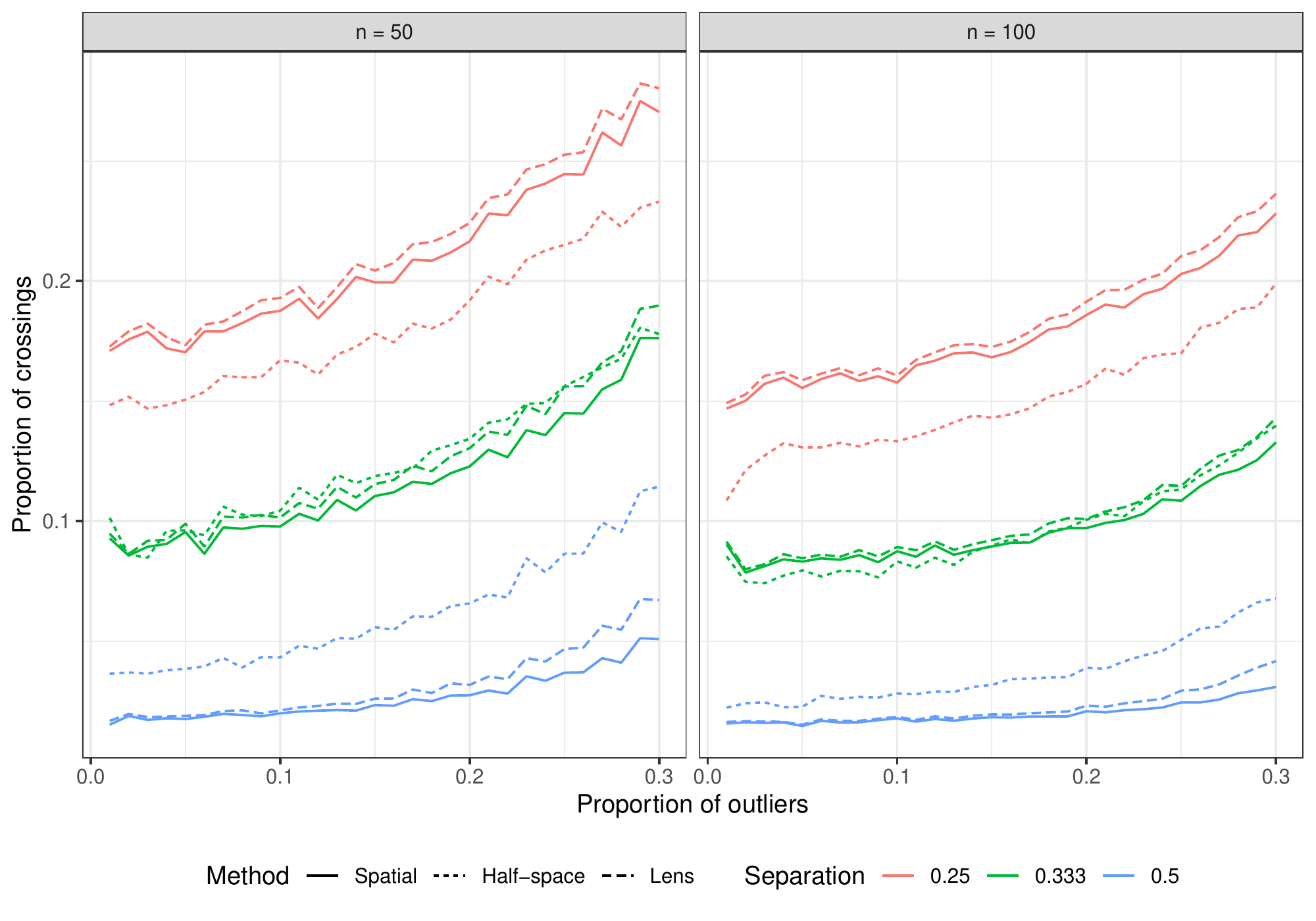}
    \caption{Proportions of bulk-outlier crossings $C$ (see the main text for the definition) for the metric spatial, half-space and lens depths under different settings over 1000 replications. A lower value indicates a better separation.}
    \label{fig:outlier_simu}
\end{figure}

We conduct 1000 replicates of each setting and in each case compute our metric spatial depth \eqref{eq:main_concept}, the metric lens depth \eqref{eq:metric_lens_depth} \citep{cholaquidis2020weighted, geenens2023statistical} and the metric half-space depth \citep{dai2022tukey}, defined as
\begin{align*}
    D_H(\mu; P_X) := \inf_{z_1, z_2 \in \mathcal{X}, d(z_1, \mu) \leq d(z_2, \mu)}P_X\{ d(X, z_1) \leq d(X, z_2) \}.
\end{align*}
We compute $D_H$ using Algorithm 1 in \cite{dai2022tukey} using all sample pairs as anchor points. All three competitors output the set of depth values for the full sample and, for each set, we compute the average number $C$ of ``bulk-outlier -crossings'', i.e., the proportion of pairs of a bulk observation and an outlying observation such that the latter gets a lower depth than the former. Thus, the smaller $C$ is, the better the depths identify/separate the outlying observations from the bulk. The average values of $C$ for each combination of settings and method are shown in Figure \ref{fig:outlier_simu}. 

From Figure \ref{fig:outlier_simu} we make the following conclusions: (i) The performances of the spatial and lens depth are in all scenarios very close to each other. Based on Theorem \ref{theo:lens_connection} this is as expected. Moreover, the spatial depth is systematically the better one out of the two. We believe that this is connected to the fact that, by Theorem \ref{theo:range}, low values of the metric spatial depth characterize a very natural form of outlyingness, something which is not guaranteed for the metric lens depth. (ii) As is natural, independent of the method, separation gets easier the larger the sample size is, the less we have outliers and the greater the separation between the groups is. (iii) The best two methods are the spatial depth and the half-space depth. The spatial depth has an edge when the bulk and the outliers are sufficiently separated, whereas the half-space depth dominates when the groups are close to each other. In both of the extreme cases $\lambda = 1/2, 1/4$, the difference between the two methods is rather large. Again we believe that this difference can be accounted by Theorem~\ref{theo:range}: large values of the the metric spatial depth indicate a very specific form of centrality, which is likely ill-suited to this particular scenario, meaning that the method encounters difficulties when the outliers are brought closer to the center of the bulk.







\subsection{Depth and non-convex distributions}

A common feature of several classical measures of depth in $\mathbb{R}^p$ is that they produce convex depth regions \citep{serfling2006depth}. Naturally, this is undesirable as soon as the data exhibits non-convex shapes. In this section we show how the proposed metric spatial depth can be adapted to obtain meaningful measures of depth also in such scenarios.

For illustration purposes, we use a simple bivariate dataset of size $n = 150$, generated as $x_i = 2 (\cos(\theta_i), \sin(\theta_i) ) + 0.1^{1/2} \varepsilon$, where $\theta_i$ are i.i.d. from the uniform distribution on $[0, 2 \pi]$ and $\varepsilon$ are i.i.d. from the bivariate standard normal distribution. This yields a sample resembling a circle around the origin on the plane, see the black points in the panels of Figure \ref{fig:nonlinear}. To exemplify the behavior of standard measures of depth under such a non-convex scenario, we divided the plane into a fine grid and computed the metric spatial depth of each grid point under the Euclidean metric. As described in Section \ref{subsec:example_hilbert}, this yields depths equivalent to the classical $L_1$-depth. The contour plot of the depths, shown in the upper left panel of Figure \ref{fig:nonlinear} (yellow colour indicates large values), reveals that the depths indeed fail to capture the natural shape of the sample, assigning the highest depth around the origin where no sample points lie. To obtain better performance, we next propose two alternative approaches.

\begin{figure}[t!]
    \centering
    \includegraphics[width = 1\textwidth]{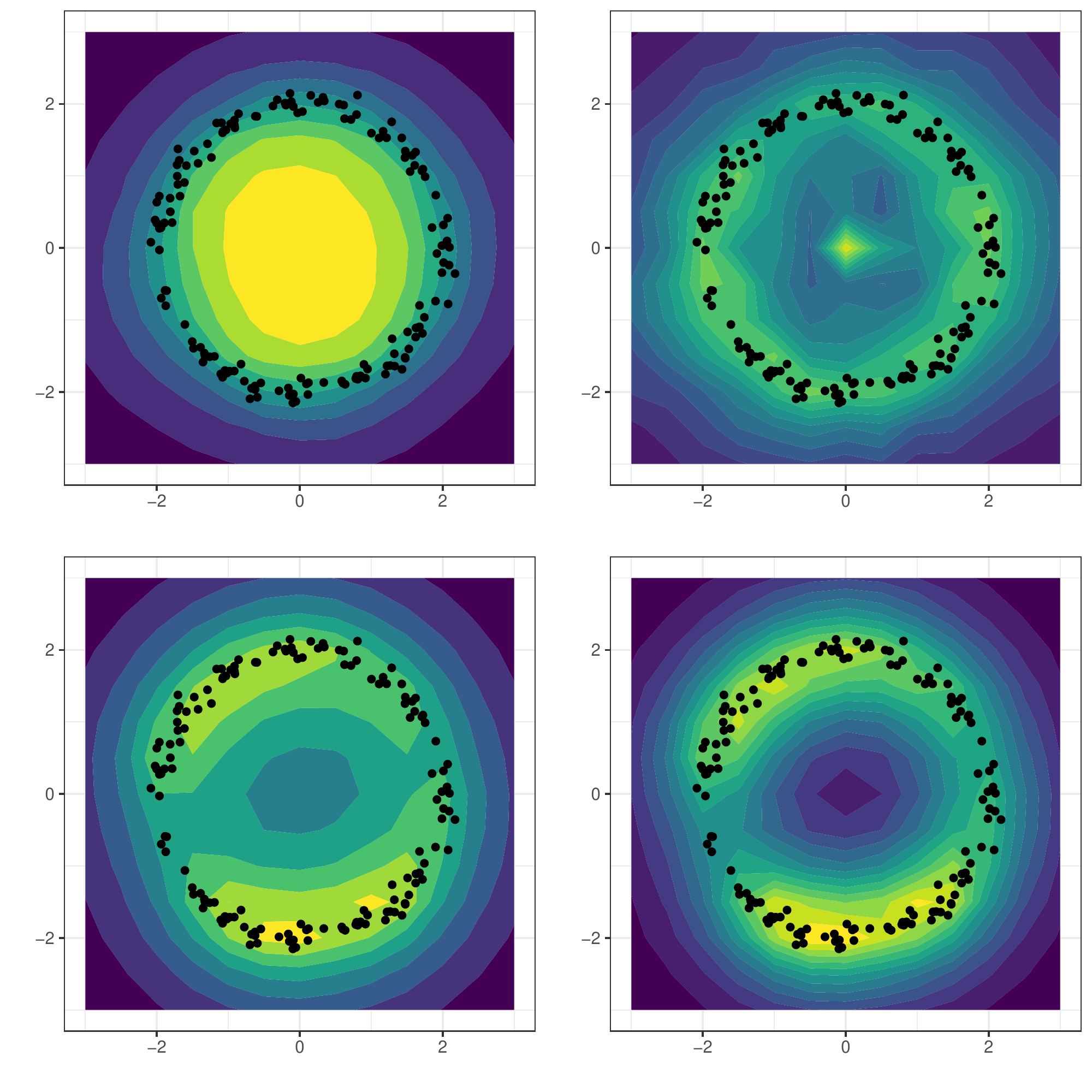}
    \caption{The depth contours of a circular data set of size $n = 150$ (the black points) under four different choices of distance for the metric spatial depth. The distances correspond, from left to right and from top to bottom: Euclidean distance, ISOMAP, RKHS with the rational quadratic kernel, RKHS with the Gaussian kernel. Yellow shades indicate the highest depths and the color scale is not uniform in the four panels}
    \label{fig:nonlinear}
\end{figure}

As a first option, we combine metric spatial depth with the idea behind ISOMAP \citep{tenenbaum2000global}, a classical method of manifold learning. ISOMAP works by using only local Euclidean distance information and disregarding the longer distances, which usually fail to capture any curved structure in the data. Namely, for each point $i$ in turn, we ``erase'' all but the $k$ smallest distances $d(x_i, x_j)$, $j \in \{ 1, \ldots , i - 1, i + 1 , \ldots , n \}$, where $k$ is a user-chosen parameter, and use the remaining distances to construct a weighted undirected graph $\mathcal{G}$ on the $n$ points. Next, we let $ d^*(x_i, x_j)$ denote the graph distance between $x_i, x_j$ in $\mathcal{G}$ (the shortest path between $x_i, x_j$ in the graph). The corresponding distances $d^*(x_i, \mu)$ between the sample and a ``test point'' $\mu$ are computed as graph distances after grafting $\mu$ to the graph $\mathcal{G}$ by its $k$ nearest neighbours. For small $k$ this results into $d^*$ that approximates the geodesic distances (assuming the points lie roughly on some manifold, as in our example). However, $k$ must also be kept large enough such that the graph $\mathcal{G}$ stays connected. The depth contours obtained with this procedure using $k = 8$ are shown in the upper right panel of Figure \ref{fig:nonlinear}. We observe that the contours capture the circular shape of the data, but assign large depths also close to the origin. This happens because the origin is roughly equally far away from all sample points, and its $k$ nearest neighbours are likely to be roughly uniformly distributed around the circle. Consequently $d^*(x_i, 0)$ cannot be too large for any $i$, giving the origin a large depth.


As a second alternative, we resort to the well-known ``kernel trick'' and Moore-Aronszajn theorem \citep{aronszajn1950theory} which says that, for every symmetric, positive definite kernel $\kappa: \mathcal{X} \times \mathcal{X} \rightarrow \mathbb{R}$, there exists a reproducing kernel Hilbert space (RKHS) $(\mathcal{H}, \langle \cdot, \cdot \rangle )$ such that $\kappa(x_1, x_2) = \langle \phi(x_1), \phi(x_2) \rangle$, for some ``feature mapping'' $\phi:\mathcal{X} \rightarrow \mathcal{H}$ determined implicitly by the choice of $\kappa$. Now, every Hilbert space is a metric space, meaning that the corresponding metric can be expressed through the kernel as $d^2(x_1, x_2) = \kappa(x_1, x_1) - 2 \kappa(x_1, x_2) + \kappa(x_2, x_2)$. Consequently, applying our proposed concept to these distances allows us to compute the metric spatial depths of the features $\phi(x_i)$. As the implicit mapping $\phi$ is typically non-linear, this allows capturing non-convexities in the original space $\mathcal{X}$.

As an example, we have used the rational quadratic kernel $\kappa(x_1, x_2) = (1 + \|x_1 - x_2\|^2)^{-1}$ (the bottom left panel of Figure \ref{fig:nonlinear}) and the Gaussian kernel $\kappa(x_1, x_2) = \exp(-0.933 \|x_1 - x_2\|^2)$ (the bottom right panel), where $\| x_1 - x_2 \|$ denotes the Euclidean distance. Inspecting the results shows that both kernels manage to capture the circular shape of the data and produce satisfactory contours. While the rational quadratic kernel still puts some depth in the origin, the Gaussian kernel, whose tuning parameter value 0.933 has been chosen manually, obtains contours that assign minimal depth to the origin. If depths are used as part of a supervised learning technique, standard cross-validation techniques could also be used to select any tuning parameter values involved in the distance function, see the next section for an example.

\subsection{Classification}

As our third example, we apply the metric spatial depth to a classification problem through the technique of depth-depth (DD) classification \citep{li2012dd}. Let $x_1, \ldots, x_{n_1} \in \mathcal{X}$ and $x_{n_1 + 1}, \ldots, x_{n_1 + n_2} \in \mathcal{X}$ denote samples from two groups and denote the corresponding empirical distributions by $P_{n1}$ and $P_{n2}$. In DD-classification, we compute the depth vectors $z_i := (D(x_i; P_{n1}), D(x_i; P_{n2}))$, $i \in  \{ 1, \ldots, n_1 + n_2 \}$ and fit a classification rule (such as a linear discriminant) to the two-dimensional sample $z_1, \ldots , z_{n_1 + n_2}$. A test point $x \in \mathcal{X}$ is then classified based on the vector $z := (D(x; P_{n1}), D(x; P_{n2}))$. This procedure extends to multiple groups in an obvious way. The main idea behind the DD-classifier is that depth functions automatically take into account the geometry of the data, avoiding the need to estimate any distributional parameters and making the method fully non-parametric


\begin{figure}[t!]
    \centering
    \includegraphics[width=1\textwidth]{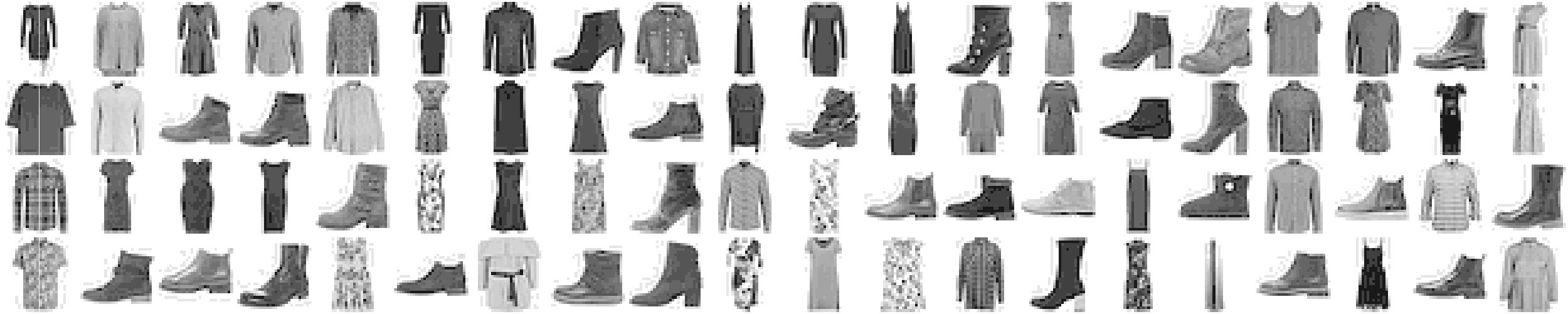}
    \caption{The first 80 images in the used data. The data consists of three classes: dresses, shirts and ankle boots.}
    \label{fig:lp_2}
\end{figure}

In this illustration, we use the Fashion MNIST dataset \citep{xiao2017fashion} available at Kaggle\footnote{https://www.kaggle.com/datasets/zalando-research/fashionmnist}. The data consists of $28 \times 28$ grayscale images of various clothing items grouped into 10 classes. For simplicity, we consider only the test part of the full data set and only three classes (dress, shirt, ankle boot). This gives us a working data set of $n = 3000$ observations of dimensionality $784$. The first 80 images in this set are shown on the left panel of Figure \ref{fig:lp_2}. We conduct a total of 100 replicates of our study and in each replicate draw randomly $150$ training images and $50$ test images from among the $3000$ observations. We use DD-classification with the metric spatial depth to classify the test images and use as our criterion the proportion of correct classifications in the test set. As a classifier we use either LDA or QDA. The main purpose of this illustration is to further demonstrate that while metric spatial depth is mainly targeted toward non-standard forms of data, it can still be used to enhance also the analysis of classical Euclidean data. We achieve this by choosing as our metric the $L_p$-distance with different values of $p = 0.5, 0.6, \ldots, 5$. The choice $p = 2$ thus corresponds to using the standard spatial depth \citep{chaudhuri1996geometric, vardi2000multivariate} in the analysis and the remaining choices correspond to novel methodology. See also \cite{perlibakas2004distance, rodrigues2018combining} for earlier uses of $L_p$-distances in the context of classification.

\begin{figure}[t!]
    \centering
    \includegraphics[width=1\textwidth]{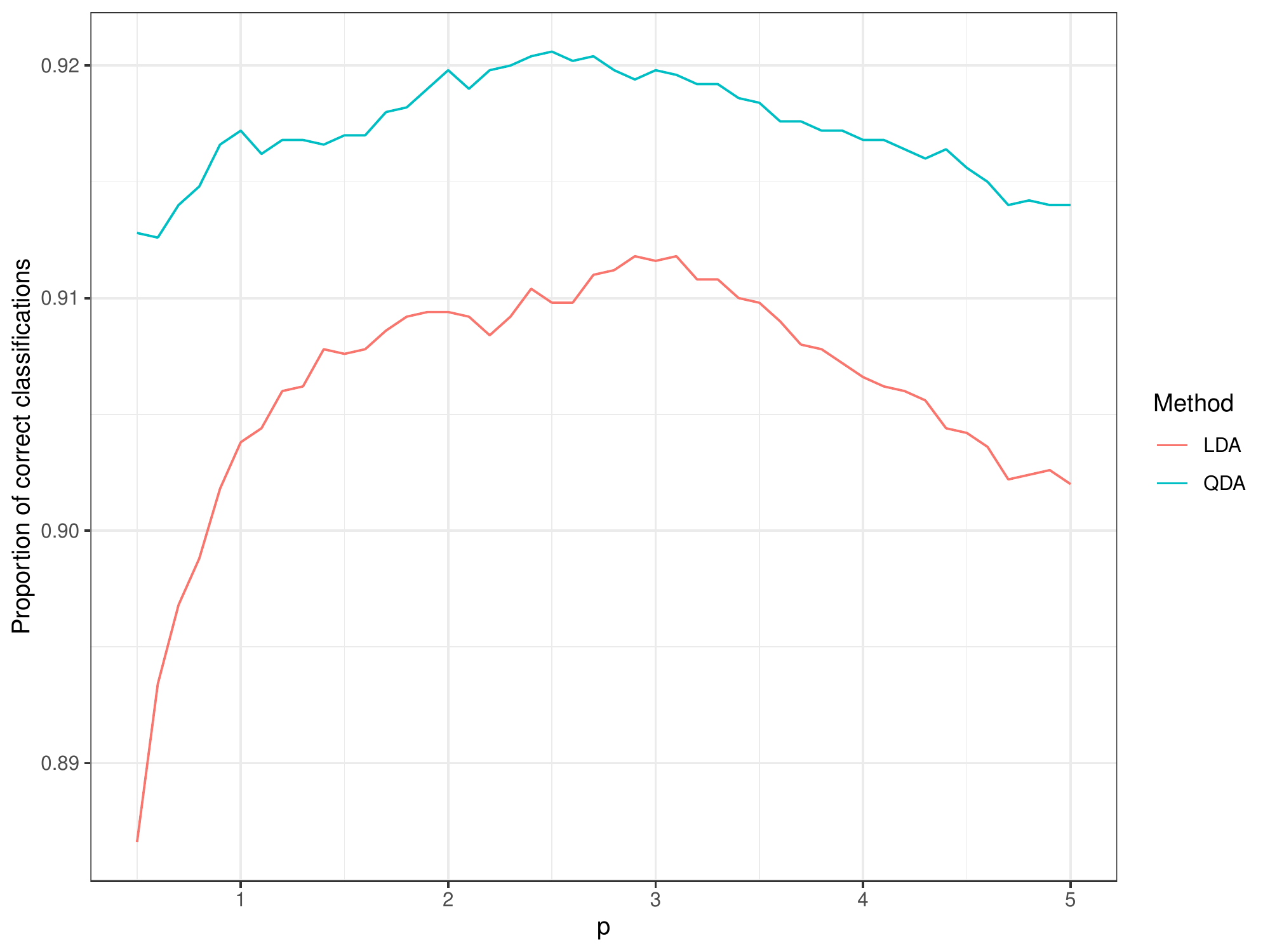}
    \caption{Average proportions of correct classifications as a function of the parameter $p$ of $L_p$-distance. The two curves correspond to the different classification methods.}
    \label{fig:lp_1}
\end{figure}


The lines in Figure \ref{fig:lp_1} depict the average proportions of correct classifications over the replicates as a function of $p$. We make two observations of interest: (i) The mean curves appear to be rather smooth functions of $p$ that achieve their maximum somewhere around $p \approx 3$ for LDA and $p \approx 2.5$ for QDA. In particular, for both LDA and QDA the maximum is achieved with a super-Euclidean geometry ($L_p$ with $p > 2$). As the extreme case $p = \infty$ corresponds to using only a single pixel's worth of information, this probably means that the classification information in the original images is to an extent concentrated to a small group of pixels. (ii) While the difference between LDA and QDA is minor, the latter appears the superior choice regardless of $p$, implying that the scatter plot of the points $z_i \in \mathbb{R}^3$ is not fully linearly separable. This is confirmed by Figure \ref{fig:lp_3} where we plot the paired scatter plot of the 3-dimensional training points $z_i$ for one replicate of the study with the choice $p = 3$. It is evident that while the separation between the groups is rather good, the point clouds still exhibit some curvature that make LDA suboptimal. Note also that the histograms on the diagonal of Figure \ref{fig:lp_3} reveal that the largest values (that is, depths) in each component of $z_i$ are indeed obtained by images of the corresponding class, as expected. To conclude, this (and the previous) example go to show that procedures targeted towards general object data can still benefit also the analysis of regular Euclidean data.

\begin{figure}[t!]
    \centering
    \includegraphics[width=1\textwidth]{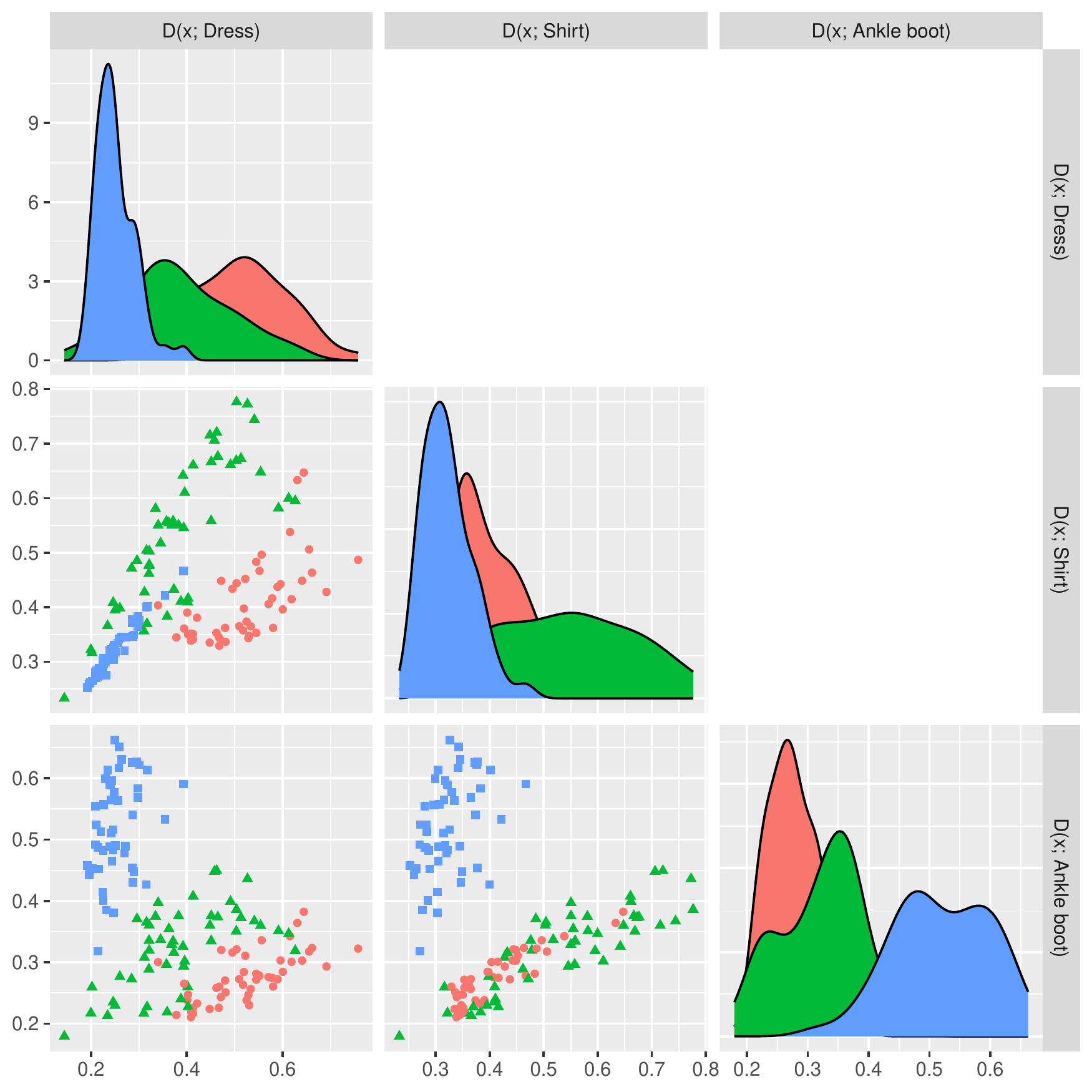}
    \caption{Paired scatter plots and histograms of the components of $z_i$ for one replicate of the study with the $L_3$-norm. The colors and shapes correspond to the three classes: dress (red dot), shirt (green triangle), ankle boot (blue square).}
    \label{fig:lp_3}
\end{figure}

\section{Discussion}

We close by commenting on possible avenues for further research. As with any measure of depth, a particularly interesting question regarding the metric spatial depth is finding the maximizers of the map $\mu \mapsto D(\mu; P_X)$ for a given probability distribution $P_X$. These maximizing points are typically called ``medians'' and, as location estimation plays an important role in almost all statistical practice, finding a way to carry out the maximization of $\mu \mapsto D(\mu; P_X)$ would be practically very valuable. For example, the clustering method $k$-means is based solely on finding centers (location estimates) of would-be clusters and any concept of ``robust metric location'' can be used to formulate a robust metric version of $k$-means.

When $(\mathcal{X}, d)$ is an Euclidean space, the points with maximal metric spatial depth are precisely those $\mu \in \mathcal{X}$ which satisfy  $\mathrm{E} \left\{ \mathrm{sgn}(X - \mu) \right\} = 0$, see Section \ref{subsec:example_hilbert}. The left-hand side of this condition is, under suitable regularity conditions for $X$, the gradient of the function $\mu \mapsto \mathrm{E} \| X - \mu \|$, showing that in the Euclidean case the spatial median is actually the $L_1$-equivalent of the mean vector. Based on this connection, it is natural to ask whether the same holds also in a general metric space. That is, are the minimizers of $\mu \mapsto \mathrm{E}\{ d(X, \mu) \}$ such that they also maximize $D(\mu; P_X)$? 

Finding the deepest points in practice, i.e., maximizing $\mu \mapsto D(\mu; P_n)$ is in the Euclidean case typically handled through gradient-based optimization, see \cite{vardi2000multivariate}. However, such techniques do not work in a general metric space due to the lack of vector space structure. In some special cases, alternative structural properties of $\mathcal{X}$ can possibly be used instead (e.g. geodesic convexity in case $\mathcal{X}$ admits a representation as a manifold). Finally, we remark that if the maximizer is searched only among the sample $x_1, \ldots, x_n$ (instead of over the full space $\mathcal{X}$), then, as described in Section \ref{sec:sample}, determining the deepest of these points is a simple enumeration task of order $\mathcal{O}(n^3)$. Given a sample of sufficient size, the deepest sample point can likely be used as a reasonable approximation to the actual deepest point.


\subsection*{Acknowledgements}

The work of JV was supported by the Academy of Finland (grants 347501 and 353769). The author has no competing interests to acknowledge.


\appendix



\section{Proofs}

\begin{proof}[Proof of Theorem \ref{theo:existence}]
Simplifying the squared reverse triangle inequality,
\begin{align}\label{eq:rev_triangle_ineq}
    d^2(X_1, X_2) \geq | d(X_1, \mu) - d(X_2, \mu) |^2,
\end{align}
gives
\begin{align*}
   d^2(X_1, \mu) + d^2(X_2, \mu) - d^2(X_1, X_2) \leq  2 d(X_1, \mu) d(X_2, \mu).
\end{align*}
Whereas, the squared triangle inequality,
\begin{align}\label{eq:triangle_ineq}
    d^2(X_1, X_2) \leq \{ d(X_1, \mu) + d(X_2, \mu) \}^2,
\end{align}
gives,
\begin{align*}
   d^2(X_1, \mu) + d^2(X_2, \mu) - d^2(X_1, X_2) \geq  -2 d(X_1, \mu) d(X_2, \mu).
\end{align*}
Hence,
\begin{align*}
    | D(\mu; P_X) | \leq 1 + \frac{1}{2} \mathrm{E} \left\{ \frac{| d^2(X_1, \mu) + d^2(X_2, \mu) - d^2(X_1, X_2) | }{d(X_1, \mu) d(X_2, \mu) } \right\} \leq 2,
\end{align*}
concluding the proof.
\end{proof}

\begin{proof}[Proof of Theorem \ref{theo:influence_function}]
    The claimed form for the influence function follows after observing that,
    \begin{align*}
        & D(\mu; (1 - \varepsilon) P_X + \varepsilon \delta_z) \\
        =& 1 - \frac{1}{2} \left[ 2 (1 - \varepsilon)^2 \{ 1 - D(\mu; P_X) \} + 2 \varepsilon (1 - \varepsilon) \mathrm{E} \{ h(X, z, \mu) \} + 2 \varepsilon^2 \mathbb{I}(\mu \neq z) \right],
    \end{align*}
    and that $| \mathrm{E} \{ h(X, z, \mu) \} | \leq 2$ by the proof of Theorem \ref{theo:existence}. Furthermore, the same bound $| \mathrm{E}\{ h(X, z, \mu) \} | \leq 2$, when combined with Theorem \ref{theo:range}, yields the second part of the current theorem.
\end{proof}

\begin{proof}[Proof of Theorem \ref{theo:range}]
    That $D(\mu; P_X)$ takes values in $[0, 2]$ follows directly from the proof of Theorem \ref{theo:existence}. For part (i), we observe that $D(\mu ; P_X) = 0$ precisely when
    \begin{align*}
        \mathbb{I}(X_1 \neq \mu \cap X_2 \neq \mu) \frac{ d^2(X_1, \mu) + d^2(X_2, \mu) - d^2(X_1, X_2) }{d(X_1, \mu) d(X_2, \mu) } = 2,
    \end{align*}
    almost surely. By the definition of the indicator function and the proof of Theorem \ref{theo:existence}, this occurs if and only if $P_X(\{\mu\}) = 0$ and if equality is achieved in the squared reverse triangle inequality \eqref{eq:rev_triangle_ineq} almost surely. The latter of these is equivalent to the statement that $P_X ( L[X_1, X_2, \mu] \cup L[X_2, X_1, \mu] ) = 1$, proving claim (i).

    Part (ii) is proven similarly, but by invoking the squared triangle inequality \eqref{eq:triangle_ineq} instead of \eqref{eq:rev_triangle_ineq}.
\end{proof}

\begin{proof}[Proof of Theorem \ref{theo:divergent_sequence}]
    We begin by establishing some terminology. A sequence $A_n$ of random variables in $\mathbb{R}$ is said to be a \textit{D-sequence} (D is for ``diverging'') if $P(|A_n| \leq M) \rightarrow 0$ for every $M > 0$. Whereas, a random variable $B$ taking values in $\mathbb{R}$ is said to be bounded if, for all $\varepsilon > 0$, there exists $K > 0$ such that $P(|B| \geq K) < \varepsilon$.

    We next show that for any bounded random variable $B$ and any D-sequence $A_n$, the quotient $B/A_n$ converges to zero in probability. To see this, fix $\Delta, \varepsilon > 0$ and pick $K > 0$ such that $P(|B| \geq K) < \varepsilon$, and $n_0$ such that, for $n > n_0$, we have $P(|A_n| \leq K/\Delta) < \varepsilon$. Then,
    \begin{align*}
        & P(|A_n| \leq |B|/\Delta) \\
        =& P(|A_n| \leq |B|/\Delta \mid |B| \leq K ) P(|B| \leq K) \\
        +& P(|A_n| \leq |B|/\Delta \mid |B| > K ) P(|B| > K). 
    \end{align*}
    Now, $P(|A_n| \leq |B|/\Delta \mid |B| \leq K ) \leq P(|A_n| \leq K/\Delta \mid |B| \leq K )$, allowing us to write,
    \begin{align*}
        & P(|A_n| \leq |B|/\Delta) \\
        \leq& P(|A_n| \leq K/\Delta)\\
        +& \{ P(|A_n| \leq |B|/\Delta \mid |B| > K ) - P(|A_n| \leq K/\Delta \mid |B| > K ) \} P(|B| > K) \\
        \leq& P(|A_n| \leq K/\Delta) + 2 P(|B| > K) \\
        \leq& 3 \varepsilon,
    \end{align*}
    showing that $B/A_n = o_p(1)$.
    
    Fix now an arbitrary point $\mu_0 \in \mathcal{X}$. It is clear that $d(X_1, \mu_0)$ is bounded. We next show that $d(X_1, \mu_n)$ is a D-sequence. Let $\varepsilon > 0$, $M > 0$ be arbitrary and let $K > 0$ be such that $P\{ d(X_1, \mu_0) \geq K \} < \varepsilon$. Pick $n_0$ such that, for all $n > n_0$, we have $d(\mu_0, \mu_n) \geq M + K$.

    Then, for $n > n_0$, we have, by the reverse triangle inequality, that
    \begin{align*}
        P\{ d(X_1, \mu_n) \leq M \} \leq& P \{ d(X_1, \mu_0) \geq d(\mu_0, \mu_n) - M \} \\
        \leq& P \{ d(X_1, \mu_0) \geq K \} \\
        \leq& \varepsilon,
    \end{align*}
    showing that $d(X_1, \mu_n)$ is a D-sequence. Finally, it is obvious that $d(X_1, X_2)$ is bounded.

    Consider now the quantity $R_{n4} := -d^2(X_1, X_2)/\{ d(X_1, \mu_n) d(X_2, \mu_n) \}$. A straightforward computation reveals that the product of $D$-sequences is a $D$-sequence and, hence, we have that $R_{n4} = o_p(1)$. Next, by the triangle inequality, the quantity $R_{n2} := d(X_1, \mu_n)/d(X_2, \mu_n)$ satisfies
    \begin{align*}
        \left\{ 1 + \frac{d(X_1, X_2)}{d(X_1, \mu_n)} \right\}^{-1} \leq R_{n2} \leq 1 + \frac{d(X_1, X_2)}{d(X_2, \mu_n)},
    \end{align*}
    showing that $R_{n2} = 1 + o_p(1)$. We can similarly show that $R_{n3} := d(X_1, \mu_n)/d(X_2, \mu_n) = 1 + o_p(1)$. Finally, it is straightforwardly proven that $R_{n1} := \mathbb{I}(X_1 \neq \mu_n \cap X_2 \neq \mu_n) = 1 + o_p(1)$. Consequently, denoting $R_n := R_{n1}(R_{n2} + R_{n3} + R_{n4}) = 2 + o_p(1)$, we have,
    \begin{align*}
        D(\mu; P_X) = 1 - \frac{1}{2} \mathrm{E}(R_n).
    \end{align*}
    As $|R_n|$ is uniformly bounded in $n$, we obtain the desired claim that $D(\mu; P_X) \rightarrow 0$.
\end{proof}

\begin{proof}[Proof of Theorem \ref{theo:continuity_1}]
    Let $X_{1n}, X_{2n} \sim P_n$ be independent and let $X_1, X_2 \sim P_X$ be independent. Then $(X_{1n}, X_{2n}) \rightsquigarrow (X_1, X_2)$. Now, since the map $g: \mathcal{X}^2 \rightarrow \mathbb{R}$ defined as
    \begin{align*}
        g(x_1, x_2) := \mathbb{I}(x_1 \neq \mu \cap x_2 \neq \mu) \frac{ d^2(x_1, \mu) + d^2(x_2, \mu) - d^2(x_1, x_2) }{d(x_1, \mu) d(x_2, \mu) },
    \end{align*}
    is $P_X$-a.s. continuous, the continuous mapping theorem guarantees that
    \begin{align*}
        g(X_{1n}, X_{2n}) \rightsquigarrow g(X_{1}, X_{2}),
    \end{align*}
    as $n \rightarrow \infty$ (note that $g$ is measurable since it is $P_X$-a.s. equal to a continuous function). The function $g$ is bounded by the proof of Theorem \ref{theo:existence}, implying that
    \begin{align*}
        \mathrm{E} \{ g(X_{1n}, X_{2n}) \} \rightarrow \mathrm{E} \{ g(X_{1}, X_{2}) \},
    \end{align*}
    and concluding the proof.
\end{proof}

\begin{proof}[Proof of Lemma \ref{lem:inner_product}]
    Plugging in the inner products, $d^2(a, b) = \langle a - b, a - b \rangle$, to the definition of $D(\mu; P_X)$, we obtain
    \begin{align}\label{eq:hilbert_space_form}
        D(\mu; P_X) = 1 - \mathrm{E} \left\{ \left\langle \mathbb{I}(X_1 \neq \mu) \frac{X_1 - \mu}{\| X_1 - \mu \|}, \mathbb{I}(X_2 \neq \mu) \frac{X_2 - \mu}{\| X_2 - \mu \|} \right\rangle \right\}.
    \end{align}
    Now, for two independent random elements $Y_1, Y_2$ in $\mathcal{X}$ whose expected values exist, we have,
    \begin{align*}
        \mathrm{E} ( \langle Y_1, Y_2 \rangle ) &= \mathrm{E} \{ \mathrm{E} ( \langle Y_1, Y_2 \rangle \mid Y_2 ) \} \\
        &= \mathrm{E} \{ \langle \mathrm{E}(Y_1), Y_2 \rangle \} \\
        &= \langle \mathrm{E}(Y_1), \mathrm{E}(Y_2) \rangle.
    \end{align*}
    Consequently, \eqref{eq:hilbert_space_form} takes the desired form
    \begin{align*}
        D(\mu; P_X) = 1 - \left\| \mathrm{E} \left\{ \mathrm{sgn}(X - \mu) \right\} \right\|^2.
    \end{align*}
\end{proof}

\begin{proof}[Proof of Lemma \ref{lem:rail_optimality}]
    The depth $D(x_1; P_X)$ equals,
    \begin{align*}
        1 - \frac{1}{2n^2} \sum_{i = 2}^n \sum_{j = 2}^n \frac{ d^2(x_i, x_1) + d^2(x_j, x_1) - d^2(x_i, x_j) }{d(x_i, x_1) d(x_j, x_1) }.
    \end{align*}
    Now, each index pair with $i = j$ is easily checked to contribute $2$ to the sum, whereas each pair of distinct indices contributes the summand,
    \begin{align}\label{eq:rail_individual_summand}
        \frac{ d^2(x_i, x_1) + d^2(x_j, x_1) - d^2(x_i, x_j) }{ d(x_i, x_1) d(x_j, x_1) }
    \end{align}
    By the proof of Theorem \ref{theo:range}, the quantity \eqref{eq:rail_individual_summand} is lower bounded by $-2$ and this bound is reached precisely if $d(x_i, x_j) = d(x_i, x_1) + d(x_1, x_j)$. The claim now follows.
\end{proof}

\begin{proof}[Proof of Lemma \ref{lem:finite_set}]
    The closed form for $D(1; P_X)$ is straightforwardly derived. For the lower bound, we observe that $s_2 := \sum_{i = 1}^n p_i^2 \leq 1$ with equality if and only if all but one of the $p_i$ equal zero. Consequently, $(1/2) ( 1 - s_2 ) + p_1 \geq 0$ with equality if and only if exactly one of $p_2, \ldots, p_n$ equals 1. For the upper bound, we first write $D(1; P_X) = 1 - (1/2) (1 - p_1)^2 - (1/2) (s_2 - p_1^2)$ from which it follows that $D(1; P_X) \leq 1$ with equality if and only if $p_1 = 1$.
\end{proof}

\begin{proof}[Proof of Theorem \ref{theo:cycle_graph}]
    We first compute the depth of an arbitrary point for a discrete, equispaced uniform distribution on $n = 2k$ points on the unit circle. We denote this distribution by $P_n$.
    
    We label the $2k$ points such that our point of interest is $0$ and the cyclical ordering of the points is $-(k - 1), -(k - 2), \ldots , -1 , 0 , 1 , \ldots , (k - 1), k$. That is, the point opposite of $0$ on the cycle is $k$. Now, the expectation in the definition of $D(0; P_n)$ equals,
    \begin{align}\label{eq:cycle_form_1}
        \frac{1}{4k^2} \sum_{i} \sum_j \frac{ d^2(i, 0) + d^2(j, 0) - d^2(i, j) }{d(i, 0) d(j, 0) },
    \end{align}
    where the summations range over all points on the cycle but $0$. To evaluate this sum, we first fix $i = 1, \ldots, k - 1$ and observe that $j$ can be chosen to lay in four different regions of the cycle, depending on whether the distances $d(j, 0)$ and $d(i, j)$ are realized clockwise or counterclockwise. Going individually through all four cases we observe that, for a fixed value of $i = 1, \ldots, k - 1$, summing over $j$ contributes the quantity
    \begin{align*}
       \frac{1}{4k^2} \left\{ 2i  + \sum_{j = 0}^i \frac{4ki + 4k(k - j) - 2i(k - j) - 4k^2}{i(k - j)} \right\}
    \end{align*}
    to the expectation. By symmetry, each $i = -1, \ldots, -(k - 1)$ contributes the same quantity as $|i|$ and, for $i = k$, the contribution is, using similar technique, seen to be $(2k - 1)/(2k^2)$. Consequently, \eqref{eq:cycle_form_1} takes (after simplification) the form,
    \begin{align*}
        4 - \frac{2}{k} - \frac{3}{2 k^2} + \frac{2}{k} H_{k - 1} - 2 \sum_{i = 1}^{k - 1} \sum_{j = 0}^i \frac{1}{i(k - j)},
    \end{align*}
    where $H_\ell$ denotes the $\ell$th harmonic number. After observing that the double sum above equals $H_{k - 1}/k + \sum_{j = 1}^{k - 1} (H_{k - 1} - H_{k - 1 - j})/j$, the expression simplifies to
    \begin{align}\label{eq:cycle_form_2}
        4 - \frac{2}{k} - \frac{3}{2 k^2} - 2 (H_{k - 1})^2 + 2 \sum_{j = 1}^{k - 2} \frac{1}{k - 1 - j} H_j.
    \end{align}
    As an intermediate result, we next show that, for all $n \geq 1$,
    \begin{align}\label{eq:cycle_intermediate_1}
        \sum_{j = 1}^{n} \frac{1}{n + 1 - j} H_j = (H_{n + 1})^2 - \sum_{j = 1}^{n + 1} \frac{1}{j^2}.
    \end{align}
    To see this, we write
    \begin{align*}
        \sum_{j = 1}^{n} \frac{1}{n + 1 - j} H_j &= \sum_{j = 1}^{n} \sum_{\ell = 1}^j \frac{1}{\ell (j + 1 - \ell)} \\
        &= \sum_{j = 1}^{n} \sum_{\ell = 1}^j \frac{1}{j + 1} \left( \frac{1}{\ell} -  \frac{1}{j + 1 - \ell} \right) \\
        &= 2 \sum_{j = 1}^n \frac{1}{j + 1} H_j \\
        &= 2 \sum_{j = 1}^{n + 1} \frac{1}{j} H_j - 2 \sum_{j = 1}^{n + 1} \frac{1}{j^2}.
    \end{align*}
    The relation \eqref{eq:cycle_intermediate_1} now follows after observing that
    \begin{align*}
        2 \sum_{j = 1}^{n + 1} \frac{H_j}{j} = \sum_{j = 1}^{n + 1} \frac{1}{j^2} + (H_{n + 1})^2,
    \end{align*}
    by the example on page 850 in \cite{spiess1990some}. Plugging in \eqref{eq:cycle_intermediate_1} to \eqref{eq:cycle_form_2} now yields
    \begin{align*}
         D(0 ; P_n) = -1 + \frac{1}{k} - \frac{1}{4 k^2} + \sum_{j = 1}^{k} \frac{1}{j^2}.
    \end{align*}
    By the classical Basel problem, as $n \rightarrow \infty$, the depth $D(0; P_n)$ approaches the value $\pi^2/6 - 1$. Hence, the proof is concluded as soon as we show that $P_n \rightsquigarrow P_X$ and invoke Theorem \ref{theo:continuity_1}.

    To see this, note that the weak convergence of distributions on $\mathbb{S}^1$ is equivalent to the pointwise convergence of the corresponding characteristic functions. That the characteristic function of $P_n$ converges pointwise to that of $P_X$ now follows by equations 3.5.14 and 3.5.16 in \cite{mardia2000directional}.

\end{proof}

\vskip 0.2in
\bibliographystyle{apalike}
\bibliography{references}

\end{document}